\documentclass[11pt, letterpaper]{article}

\usepackage[margin = 1in]{geometry}
\usepackage{amsthm,amssymb,amsmath,graphicx,mathtools,enumerate}
\usepackage{caption}
\usepackage{subcaption}
\usepackage[shortlabels]{enumitem} 
\usepackage[british]{babel}
\usepackage[utf8x]{inputenc}
\usepackage{enumitem}


\usepackage[usenames,dvipsnames]{color}

\theoremstyle{plain}
\newtheorem{theorem}{Theorem}

\newtheorem{lemma}[theorem]{Lemma}
\newtheorem{conjecture}[theorem]{Conjecture}
\newtheorem{claim}[theorem]{Claim}
\newtheorem{observation}[theorem]{Observation}

\theoremstyle{definition}
\newtheorem{definition}[theorem]{Definition}
\newtheorem{setup}[theorem]{Setup}

\newtheorem*{procedure*}{Random colouring procedure}

\numberwithin{equation}{section}




\newcommand{\sm}{\setminus}
\renewcommand{\subset}{\subseteq}

\newcommand{\NATS}{\mathbb{N}}

\newcommand{\K}{\mathrm{Keep}}
\newcommand{\R}{\mathrm{Retain}}

\newcommand{\Eq}{\mathrm{Eq}}

\renewcommand{\Pr}{\mathbf{Pr}}
\newcommand{\Exp}{\mathbf{E}}


 \newcommand{\Vq}{\mathrm{Vq}}

 \renewcommand{\l}{\ell}
 \newcommand{\p}{p}
 \newcommand{\cL}{\mathcal{L}}
 \newcommand{\Res}{\mathrm{Reserve}}

\title{An Improved Bound for the Linear Arboricity Conjecture}

\author{Richard Lang and Luke Postle}
\begin{document}

\maketitle

\begin{abstract}
	In 1980, Akiyama, Exoo and Harary posited the Linear Arboricity Conjecture which states that any graph $G$ of maximum degree $\Delta$ can be decomposed into at most $\left\lceil \frac{\Delta}{2}\right\rceil$ linear forests. 
	(A forest is linear if all of its components are paths.)
	In 1988, Alon proved the conjecture holds asymptotically. 	The current best bound is due to Ferber, Fox and Jain from 2020 who showed that $\frac{\Delta}{2}+ O(\Delta^{.661})$ suffices for large enough $\Delta$. Here, we show that $G$ admits a decomposition into at most $\frac{\Delta}{2}+ 3\sqrt{\Delta} \log^4 \Delta$ linear forests provided $\Delta$ is large enough.
	
	Moreover, our result also holds in the more general list setting, where edges have (possibly different) sets of permissible linear forests. Thus our bound also holds for the List Linear Arboricity Conjecture which was only recently shown to hold asymptotically by Kim and the second author. Indeed, our proof method ties together the Linear Arboricity Conjecture and the well-known List Colouring Conjecture; consequently, our error term for the Linear Arboricity Conjecture matches the best known error-term for the List Colouring Conjecture due to Molloy and Reed from 2000. This follows as we make two copies of every colour and then seek a proper edge colouring where we avoid bicoloured cycles between a colour and its copy; we achieve this via a clever modification of the nibble method.
	
\end{abstract}

\section{Introduction}
A \emph{linear forest} is a disjoint union of paths.
We study the decomposition of graphs into few linear forests.
For instance, by Vizing's theorem any graph $G$ of maximum degree $\Delta$ can be decomposed into at most $\Delta+1$ matchings and hence $\Delta+1$ linear forests.
In 1980, Akiyama, Exoo and Harary~\cite{AEH80} conjectured that one can do much better:
\begin{conjecture}[Linear Arboricity Conjecture]\label{con:linear-arboricity}
	Any graph $G$ of maximum degree $\Delta$ admits a decomposition into at most $\lceil \Delta/2\rceil$ linear forests.
\end{conjecture}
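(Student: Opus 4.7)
The plan is to attack the conjecture by first reducing to a $\Delta$-regular instance and then splitting by the parity of $\Delta$. A standard preliminary step is to embed $G$ into a $\Delta$-regular (or, if convenient, a $2\lceil \Delta/2\rceil$-regular) supergraph $G'$ on a possibly enlarged vertex set, using a classical padding argument: for odd $\Delta$ we add a perfect matching between two copies of $V(G)$ and for even $\Delta$ we pad with a regular bipartite gadget. Any decomposition of $G'$ into $\lceil \Delta/2 \rceil$ linear forests restricts to one of $G$, so we may focus on the regular case.

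For even $\Delta = 2k$, the target is exactly $k$ linear forests, and Petersen's $2$-factor theorem decomposes $G' = F_1 \cup \cdots \cup F_k$ into $2$-factors. Turning each $F_i$ into a linear forest requires deleting at least one edge from every cycle of $F_i$; to keep the count at $k$, each deleted edge $e_i$ must be reinserted into some modified $F_j'$ without creating a vertex of degree $3$ or a cycle. I would model this redistribution as an auxiliary hypergraph matching / flow problem: one side lists cycles (with their allowed deletion choices) and the other lists candidate insertion slots in each $F_j'$; a feasible global modification corresponds to a system of disjoint representatives. The aim would then be to find such a system via a Hall-type argument combined with an entropy-compression or Local-Lemma-style analysis that exploits the sparsity of conflicts between cycles in different $2$-factors.

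For odd $\Delta = 2k+1$, the target is $k+1$ linear forests; here I would exploit the list-colouring bridge that the abstract of this paper advertises. Concretely, every linear forest corresponds to two matchings with no bicoloured cycle, so $\lceil\Delta/2\rceil$ linear forests correspond to a proper edge colouring with exactly $\Delta+1$ colours (two per forest) that is cycle-free on each prescribed colour pair. Thus the exact conjecture reduces to a \emph{deterministic, exact} strengthening of the List Colouring Conjecture where, in addition to the edge colouring, one controls the $\binom{\lceil\Delta/2\rceil}{1}$ pair-wise cycle constraints between twin colours.

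The main obstacle is precisely the step where every existing technique, including the nibble-based approach of this paper, concedes an additive error of order at least $\sqrt{\Delta}$ due to the intrinsic concentration slack of the Rödl-nibble and the Lovász Local Lemma. To close the gap to the exact bound $\lceil\Delta/2\rceil$, one would need either a structural absorption argument that swallows the $O(\sqrt{\Delta}\log^4 \Delta)$ leftover from this paper's decomposition into the forests already constructed, or an entirely new colouring framework that sidesteps concentration inequalities. I expect the absorption step to be by far the hardest obstacle: after more than forty years the conjecture remains open, and it is this final $o(\Delta)$ gap, rather than the asymptotic statement, that has resisted every proposed technique.
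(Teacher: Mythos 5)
The statement you are trying to prove is the Linear Arboricity Conjecture itself, which the paper does not prove: it is stated as background, and the paper's actual theorem only achieves $\frac{\Delta}{2}+3\sqrt{\Delta}\log^4\Delta$ linear forests. Your proposal likewise does not prove it, and you say so yourself; the issue is that the central step is left entirely unfilled. The reduction to the regular case and Petersen's $2$-factor theorem are standard and fine, but they are the easy part. The crux --- deleting one edge from every cycle of each $2$-factor and reinserting the deleted edges into other (modified) forests without creating a vertex of degree $3$ or a new cycle --- is essentially equivalent to the conjecture itself, and your proposed mechanism for it (a ``Hall-type argument combined with entropy-compression or Local-Lemma-style analysis'') cannot deliver the exact bound $\lceil\Delta/2\rceil$: Local Lemma and nibble arguments intrinsically concede lower-order slack, which is precisely why every known result, including this paper's, carries an additive error term of order at least $\sqrt{\Delta}$. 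No Hall-type condition is verified, no auxiliary hypergraph is shown to admit the required system of disjoint representatives, and no absorption structure is constructed; the paragraph where you acknowledge that the absorption step is ``by far the hardest obstacle'' is exactly the place where a proof would have to begin.

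Your odd-$\Delta$ discussion is also not a new route: reformulating $\lceil\Delta/2\rceil$ linear forests as a proper edge colouring with twin colours in which bicoloured cycles between a colour and its twin are forbidden is exactly the framework of this paper (two copies of each colour, a modified nibble that uncolours edges threatening such cycles), and within that framework the paper only obtains the asymptotic bound, not the exact one. So the proposal should be read as a research plan identifying the known obstruction, not as a proof; the conjecture remains open, and nothing in your argument closes the $o(\Delta)$ gap.
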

The Linear Arboricity Conjecture has received a fair amount of attention throughout the years and has been shown for many particular families of graphs (see~\cite{FFJ20}). Here we are interested in general bounds for large maximum degree $\Delta$.

The first result of this type was obtained by Alon~\cite{Alo88} in 1988 and states that any graph $G$ of maximum degree at most $\Delta$ can be decomposed into at most $\frac{\Delta}{2} + O\left(\Delta \cdot \frac{\log \log \Delta}{\log \Delta}\right)$ linear forests. The error term was improved to $O(\Delta^{2/3} \log^{1/3} \Delta)$ by Alon and Spencer~\cite{AS16} in 1992. In 2020, Ferber, Fox and Jain~\cite{FFJ20} reduced this further to $O(\Delta^{.661})$. The first main result of this paper is the following improvement.

\begin{theorem}\label{thm:Ordinary}
Any graph $G$ of maximum degree $\Delta$ admits a decomposition into at most $\frac{\Delta}{2} + 3 \Delta^{1/2} \log^4\Delta $ linear forests.
\end{theorem}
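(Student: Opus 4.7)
Set $k := \lceil \Delta/2 \rceil + \lceil 3 \Delta^{1/2} \log^4 \Delta \rceil$ and introduce a palette of $2k$ colours arranged in paired classes $\{c_i, c_i'\}$, $1 \leq i \leq k$. Call a proper edge colouring of $G$ using this palette \emph{good} if, for every $i$, no cycle of $G$ is bicoloured in $\{c_i, c_i'\}$. Since the union of two matchings is always a disjoint union of paths and even cycles, the subgraph formed by the edges coloured in $\{c_i, c_i'\}$ is then a linear forest, and so a good colouring immediately yields a decomposition of $G$ into $k$ linear forests of the required size. The goal is therefore to construct such a good colouring.

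\textbf{A modified nibble.} I would adapt the semi-random nibble argument underlying the Molloy--Reed $O(\Delta^{1/2}\log^4\Delta)$ error term for the List Colouring Conjecture. For each uncoloured edge $e=uv$, maintain a list $L(e)$ of \emph{admissible} colours, namely colours currently used at neither endpoint of $e$ and not the completion of a bicoloured cycle, i.e., for which no $(c_i, c_i')$-alternating path from $u$ to $v$ (starting and ending with $c_i'$-edges) is currently present. In each round, activate every uncoloured edge independently with probability $p = p(\Delta)$, tentatively assign an activated edge a uniformly random colour from its list, and keep the assignment only if it remains admissible after all simultaneous choices are revealed. A Talagrand-based concentration argument, following the pattern of Molloy--Reed, should show that $|L(e)|$ and the uncoloured degree at each vertex shrink by the same geometric factor as in the ordinary nibble, up to lower-order losses.

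\textbf{Main obstacle: controlling bicoloured cycle threats.} The key new feature is that a colour $c_i$ may leave $L(e)$ not only because it was placed on an edge incident to $e$ (as in the ordinary nibble) but also because a bicoloured $(c_i, c_i')$-path has just been completed between the endpoints of $e$. The crux of the proof is to show that the number of such \emph{threat paths} at each edge and each pair concentrates tightly around its expectation, and that the expected loss to $|L(e)|$ from these threats is of the same order as the loss from ordinary edge conflicts. This requires bounding the Lipschitz constant of the threat-count in terms of a single colour choice, which I expect forces one to restrict attention to threats of length $O(\log \Delta)$ and to separately show that longer bicoloured paths arise with negligible probability. This is the main place where the analysis departs from the standard nibble, and I anticipate it being the central technical contribution.

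\textbf{Finish.} After $O(\log \Delta)$ nibble rounds, the uncoloured subgraph $H$ has maximum degree $o(\Delta^{1/2}\log^4\Delta)$ while every edge of $H$ still carries an admissible list of size $\Omega(\Delta^{1/2}\log^4\Delta)$. Since the list size dominates the uncoloured degree by a factor of $\log^{\Omega(1)} \Delta$, a deterministic greedy completion (or a short second nibble) finishes the colouring while preserving goodness, because each remaining edge has a surplus of admissible colours vastly exceeding the number of extra conflicts or bicoloured threats that can arise during the finish. The resulting good colouring decomposes $G$ into the desired $k$ linear forests, establishing Theorem~\ref{thm:Ordinary}.
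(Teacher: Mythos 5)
Your reduction and the modified nibble are in essence the paper's own strategy: pair the colours, seek a proper edge colouring avoiding bicoloured cycles within each pair, strip a colour $c_i$ from $L(uv)$ whenever an alternating $(c_i,c_i')$-path from $u$ to $v$ threatens to close a cycle, cut off these ``dangerous'' paths at length $O(\log\Delta)$, and use an activation probability to tame the tail --- all of which match the paper's Procedure, the definitions of dangerous/suspicious paths, and Observations~\ref{obs:number-suspicious}--\ref{obs:probability-dangerous} and Claims~\ref{cla:MR-edge-bounds}--\ref{cla:simultaneously}.

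The genuine gap is the finish. You assert that after the nibble the uncoloured subgraph has maximum degree $o(\Delta^{1/2}\log^4\Delta)$ while every edge still has $\Omega(\Delta^{1/2}\log^4\Delta)$ admissible colours, so a $\log^{\Omega(1)}\Delta$ surplus lets a greedy completion close. That separation does not occur: the nibble shrinks the list size $L_i$ and the colour degree $N_i$ by the same geometric factor, and they stay within a factor of two of each other (Lemma~\ref{lem:sizes} gives $L_i>N_i>L_i/2$); the process is run until $L_{i_0}<3\log^7\Delta$, at which point both parameters are merely polylogarithmic and comparable, with no $\sqrt{\Delta}$-scale surplus. The missing device is colour reservation (Lemma~\ref{lem:reserved-colours}): before the nibble begins, set aside reserve sets $\Res(v)$ so that $|\Res(e)|\ge (\log^8\Delta)/2$ while each reserved colour contends with only $O(\log^{7.5}\Delta)$ incident edges; run the nibble only on non-reserved colours (so these bounds are preserved as the uncoloured graph shrinks); and finish from the reserve lists via Alon's completion lemma (Lemma~\ref{lem:finisher}). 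Crucially, because the finish is a proper $1$-edge-colouring with colours untouched during the nibble, each such colour class is a matching and cannot create a new bicoloured cycle --- which also supplies the ``preserving goodness'' argument you only gesture at. Without reservation, neither the list-versus-degree surplus nor the preservation of acyclicity in the finish is secured, and the proof does not close.
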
 

In fact, we will prove Theorem~\ref{thm:Ordinary} in the more general setting of list colouring, where edges have (possibly different) sets of permissible linear forests. Indeed, the list version of Conjecture~\ref{con:linear-arboricity}, called the List Linear Arboricity Conjecture, was formulated by An and Wu in 1999 (see~\cite{AW09}). In 2021, Kim and the second author~\cite{KP17} proved that the List Linear Arboricity Conjecture holds asymptotically, that is if every edge $e$ of $G$ has a list $L(e)$ of size at least $\frac{\Delta}{2} +o(\Delta)$, then there exists a colouring from the lists such that each colour class is a linear forest. Our main result also improves this error term to $O(\Delta^{1/2} \log^4\Delta)$.

Indeed, here we tie together the Linear Arboricity Conjecture and the well-known List Colouring Conjecture posited independently by many researchers in the 1970s and 1980s.

\begin{conjecture}[List Colouring Conjecture]\label{conj:LCC}
If $G$ is a graph of maximum degree $\Delta$ and $L$ is a list assignment to $E(G)$ such that $|L(e)|\ge \Delta+1$ for every edge $e$ of $G$, then there exists an $L$-colouring of the edges of $G$.
\end{conjecture}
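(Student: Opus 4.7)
The final statement is the List Colouring Conjecture, one of the central open problems in graph colouring; no proof is known, and the best partial result -- the Molloy--Reed asymptotic bound $|L(e)| \ge \Delta + O(\sqrt{\Delta}\log^4 \Delta)$ -- is precisely the benchmark that the present paper is aiming to match for the list linear arboricity problem. I can therefore only sketch the natural semi-random attack that one would attempt, extending Kahn's 1996 asymptotic proof, while flagging where the approach currently breaks down.

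The opening move is to relax the edge colouring problem to a fractional one. Assign each pair $(e,c)$ with $c \in L(e)$ a weight $x_{e,c} \in [0,1]$ subject to $\sum_{c \in L(e)} x_{e,c} = 1$ for every edge $e$ and $\sum_{e \ni v} x_{e,c} \le 1$ for every vertex $v$ and colour $c$. Whenever $|L(e)| \ge \Delta$ such a fractional colouring exists by a Hall-type matching argument on the bipartite edge--colour incidence structure. The entire challenge is then to round this fractional solution to a genuine list edge colouring without enlarging the lists.

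The rounding would be carried out by an iterative R\"odl-type nibble. In each round, every uncoloured edge $e$ proposes a colour $c \in L(e)$ with probability proportional to its current fractional weight $x_{e,c}$; proposals surviving at both endpoints are accepted and the residual fractional weights are renormalised. Using Talagrand's inequality one shows that, for each surviving edge $e$ and each still-available colour $c \in L(e)$, the ratio of available-list-size to uncoloured-degree is preserved up to a multiplicative $(1 \pm O(\Delta^{-1/2}\log \Delta))$ factor. After $O(\log \Delta)$ rounds the residual instance becomes sparse enough that the Lov\'asz local lemma (or an absorption step) can complete the colouring.

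The main obstacle -- and the reason the conjecture has stood since the 1970s -- is controlling the cumulative slack. Each nibble round costs roughly $\sqrt{\Delta}\log \Delta$ in the gap between list size and degree through concentration losses, and summing over rounds reproduces exactly the Molloy--Reed error term (mirroring the $3\sqrt{\Delta}\log^4 \Delta$ error that the present paper attains for list linear arboricity). Pushing this gap down to $1$, as the conjecture demands, would require either a structural analysis of near-tight list assignments analogous to the Vizing fan/chain arguments that yield $\Delta+1$ in the non-list case, or a deterministic / entropy-compression approach that sidesteps concentration entirely. Neither tool is currently available, and closing this gap is precisely the open problem toward which this paper's techniques are calibrated rather than one they resolve.
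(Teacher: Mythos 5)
This statement is the List Colouring Conjecture, which the paper states as an open conjecture and does not prove; it is included only as context and as the source of the error term that Theorem~\ref{thm:main} matches. You correctly recognise this, and your sketch of the semi-random (nibble plus Talagrand plus Lov\'asz Local Lemma) attack and of where it stalls at the Molloy--Reed error term $O(\sqrt{\Delta}\log^4\Delta)$ is consistent with how the paper positions the problem, so there is nothing to fault and nothing further to compare.
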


In 1996, Kahn~\cite{Kah96} proved that the List Colouring Conjecture holds asymptotically using R\"odl's famous nibble method. In 2000, Molloy and Reed~\cite{MR00} improved the error term to $O(\Delta^{1/2} \log^4\Delta)$ which is still the best known bound to date. Kim and the second author~\cite{KP17} showed how the List Colouring Conjecture and List Linear Arboricity Conjecture are related via the following definition. 

Consider a graph $G$ with \emph{lists of colours} $L(e) \subset \NATS$ assigned to its edges.
A \emph{$t$-edge-colouring} of $G$ from $L(e)$ is an assignment of colours to the edges of $G$ such that each edge receives a colour from its list and no vertex is incident to more than $t$ edges of the same colour.

Note that, if all lists are the same, then a decomposition of $G$ into linear forests is equivalent to a $2$-edge-colouring that does not induce any monochromatic cycles. Kim and the second author~\cite{KP17} noted that results for list $t$-edge-colouring follow from results for the List Colouring Conjecture by replacing every colour with $t$ copies of itself.

Here then is the main result of this paper which shows we can also avoid monochromatic cycles in a $2$-edge-colouring (equivalently bicoloured cycles between a colour and its copy).

\begin{theorem}\label{thm:main}
	Let $G$ be a graph with sufficiently large maximum degree $\Delta$. If $L$ is a list assignment to the edges of $G$ such that $$|L(e)| \geq \frac{\Delta}{2}+ 3\sqrt{\Delta} \log^4 \Delta$$
	for every edge $e$ of $G$, then there is a $2$-edge-colouring of $G$ from $L(e)$ without any monochromatic cycles.
\end{theorem}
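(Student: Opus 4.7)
The plan, following the hint in the abstract, is to reduce Theorem~\ref{thm:main} to a strengthened list edge-colouring problem by colour-doubling, and then to adapt the nibble proof of the List Colouring Conjecture due to Molloy and Reed~\cite{MR00} so as to additionally forbid bicoloured cycles. Formally, for each edge $e$ set $L'(e) = \{(c,i) : c \in L(e),\ i \in \{1,2\}\}$, so $|L'(e)| \geq \Delta + 6\sqrt{\Delta}\log^4\Delta$. Any proper $L'$-edge-colouring $\phi$ projects via $(c,i)\mapsto c$ to a $2$-edge-colouring of $G$ from $L$, and this projection has no monochromatic cycle in colour $c$ if and only if the subgraph $F_c$ spanned by edges with $\phi$-colour in $\{(c,1),(c,2)\}$ is acyclic. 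Since properness already forces $\Delta(F_c)\leq 2$, the task reduces to finding a proper $L'$-edge-colouring in which every $F_c$ is a linear forest.

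To produce such a colouring I would run a nibble in the Molloy--Reed style with one extra retention rule. In each round, each uncoloured edge $e$ is activated independently with a small probability $p$ and, if activated, assigned a tentative colour $\gamma=(c,i)\in L'(e)$ uniformly at random. Call $\gamma$ \emph{retained on $e$} if (i) no edge incident to $e$ also receives tentative colour $\gamma$ (standard properness) and (ii) adding $e$ to the current $F_c$ does not close a cycle. Retained colours are committed and deleted from the lists of neighbouring edges, so invariant (ii) is preserved throughout. The nibble runs for $O(\log\Delta)$ rounds, reducing the problem to a residual graph of list-slack $\sqrt{\Delta}\log^4\Delta$, which is then finished by a Lovász Local Lemma step still subject to rule (ii).

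The heart of the proof is to show that rule (ii) is only a subleading perturbation of the usual Molloy--Reed dynamics. For a \emph{single} colour $c$ this is immediate: at any moment $F_c$ is a linear forest, so for each vertex $u$ there is at most one vertex $v$---the other endpoint of the $F_c$-path through $u$---whose partnering would close a cycle. The subtlety is that for a fixed edge $e=uv$, the number of colours $c$ for which $v$ happens to be the partner of $u$ in $F_c$ could \emph{a priori} be as large as $\deg_G(u)$, so the total fraction of $L'(e)$ excluded by rule (ii) is not obviously negligible. The substantive content of the proof must therefore be to establish that, throughout the nibble, this ``partner-colour'' count stays bounded by at most a polylogarithmic factor (in expectation and with concentration), so that rule (ii) slows retention by only a $1+o(1)$ factor; intuitively this should succeed because each $F_c$ is sparse and random-like at every stage, so most path components remain short.

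The main obstacle is implementing this concentration rigorously. The partner-colour count is a genuinely global functional of the $F_c$'s and so does not immediately admit Talagrand/McDiarmid-style bounds with the right Lipschitz constants. I anticipate that the ``clever modification of the nibble'' promised in the abstract consists precisely in tracking this quantity locally---for instance by orienting the paths of each $F_c$ and reserving in advance a small portion of each list (the paper's $\Res$ macro hints at this) so that the extra bad events introduced by rule (ii) become bounded-difference and classical concentration applies. Once such a device is in place, the remainder of the argument parallels~\cite{MR00} applied to the doubled lists $L'(e)$, and yields Theorem~\ref{thm:main} with the same error term as the best current bound for the List Colouring Conjecture.
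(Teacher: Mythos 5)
Your high-level reduction is exactly the paper's: double colours so that $L'(e)=L(e)\times\{1,2\}$ gives list slack $\Delta+6\sqrt{\Delta}\log^4\Delta$, then run a Molloy--Reed nibble that additionally suppresses bicoloured $c$-$c'$ cycles. However, the core mechanism you propose is not the one that works, and the remedy you speculate about is not what the paper does.

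First, your retention rule~(ii) is underspecified in a way that matters: if several edges of a potential $c$-$c'$ cycle all receive tentative colours in the same round, none of them individually ``closes a cycle with the current $F_c$'', so a purely local rule against the pre-round $F_c$ does not preserve acyclicity. The paper handles this by checking, for each uncoloured $uv$ and each $c\in L(uv)$, whether a \emph{dangerous path} for $(u,v,c)$ (an alternating $c'$-$c$ path from $u$ to $v$) emerges after the round's tentative assignments; the check deliberately uses assignments rather than retainments, which is wasteful but makes the analysis tractable.

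Second, and more substantively, the ``partner-colour count'' is not bounded polylogarithmically and then fed through concentration. Instead the paper maintains the invariant that \emph{no} dangerous path exists at the start of each round by removing $c$ from $L(uv)$ (not merely declining to retain it) whenever a dangerous path for $(u,v,c)$ appears. With the count forced to zero at the start of every round, one only needs to bound the number of \emph{newly} dangerous paths per round, and this is where the activation probability $\p$ enters: a suspicious path with $k$ uncoloured edges becomes dangerous with probability $(\p/L_i)^k$ while there are at most $N_i^{k-1}$ of them, so the expected number of colours to delete from $L(uv)$ is a geometric series summing to $O(1)$. Your proposal does not assign the activation probability its actual role (you call it ``small'' without saying why it helps), and it is precisely this geometric decay that makes the loss negligible.

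Third, there is a dependency issue you do not address: dangerous paths can be arbitrarily long, which would ruin the Lov\'asz Local Lemma step. The paper truncates suspicious paths at $\l=2\log\Delta$ uncoloured edges, tracking $P(u,v,c;k)$ for $k<\l$ together with $P(u,c;\l)\cup P(v,c;\l)$; this truncation also supplies the bounded certification needed to apply Talagrand with $r=\l$. Finally, your guess that the $\Res$ reservation implements the cycle constraint is incorrect: reservation plays the same role as in Molloy--Reed, namely providing fresh colours for Alon's finishing lemma, and has nothing to do with bicoloured cycles.
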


The proof employs the nibble method and we purposely wrote it to follow the proof of Molloy and Reed. The key new idea is to modify the random colouring procedure to also remove colours from lists if they would create a bicoloured cycle between a colour and its copy. We will show that this removal has a negligible effect on the list sizes during the nibble process (indeed it has an even smaller effect than the concentration error terms from the main part of the procedure). 

However, we were not able to remove such colours when the cycles are too long since this would create too large of a dependency when applying the Lov\'asz Local Lemma. To handle this, we remove colours that would form short bicoloured cycles (say with at most $O(\log \Delta)$ newly coloured edges) or bicoloured paths (with roughly $\log \Delta$ newly coloured edges). We argue that there are not too many of these paths since we introduce an activation probability (say $1/2$) to colouring edges, and hence these paths will also yield a negligible loss in list size.

The rest of this paper is organized as follows. In the next section, we discuss our approach in more detail and derive Theorem~\ref{thm:main} from Claim~\ref{cla:nibbler}, which is the centerpiece of the proof. In Section~\ref{sec:nibbler}, we prove Claim~\ref{cla:nibbler}.
 
\section{Proof of the main result}\label{sec:nibble}
In this section, we prove Theorem~\ref{thm:main}.
Let $G$ be a graph of sufficiently large maximum degree $\Delta$ and consider an assignment of lists $\cL'$ with 
$|\cL'(e)| \geq \frac{\Delta}{2}+ 3\sqrt{\Delta} \log^4 \Delta$ for every edge $e$ of $G$.
Our goal is to find a $2$-edge-colouring of $G$ from $\cL$, which does not contain monochromatic cycles.
It is instructive to approach this problem via $1$-edge-colourings.
Consider the  list product $\cL(e) = \cL'(e) \times \{1,2\}$, which satisfies
\begin{align}\label{equ:size-cL}
	|\cL(e)| \geq \Delta+ 6\sqrt{\Delta} \log^4 \Delta
\end{align}
for every edge $e$.
We say that colours $(c,1)$ and $(c,2)$ are \emph{twins}.
Molloy and Reed~\cite{MR00} refined an approach of Kahn~\cite{Kah96} to obtain a $1$-edge-colouring of $G$ from $\cL$ by iteratively colouring the edges using a random colouring procedure.
Note that this translates to a $2$-edge-colouring from $\cL'$.
However such a colouring could potentially contain a monochromatic cycle.
Fortunately, we can prevent such cycles from arising by a minor but clever modification of the random colouring process.
In the following we explain this idea in more detail.

The rest of this section is organized as follows.
In the next subsection, we discuss how the random colouring approach can be modified to account for monochromatic cycles.
In Subsection~\ref{sec:adapting-former-work}, we embed our modification into a standard exposition of the iterative colouring procedure of Molloy and Reed~\cite{MR13} and then finish the proof of Theorem~\ref{thm:main}.

\subsection{Modifying the random colouring procedure}
The iterative random colouring procedure, which has been used to obtain $1$-edge-colourings, is a fairly straight forward process.
In each step, a random colour from the lists of (still available) colours is assigned to each uncoloured edge.
Then all arising conflicts are uncoloured.
It can be shown that with positive probability, this results in a  well-behaved partial $1$-edge-colouring allowing for the repetition of the process until most edges of the graph have been coloured.
To prove Theorem~\ref{thm:main}, we need in addition to prevent monochromatic cycles from emerging.

Hence we modify the procedure as follows.
In each step, we also uncolour edges that are on cycles whose colour assignments have left them `dangerously close' to becoming monochromatic under $\cL'$.
As we will see, the error terms resulting from this change are dwarfed by already existing error terms.
Hence the procedure can be carried out essentially in the same way as before.
In what follows, we introduce the appropriate notation to capture this idea and discuss further details.

Let us remark that, for the remainder of the proof, we focus our analysis largely on the list product $\cL(e)$, whose size is bounded in inequality~\eqref{equ:size-cL}.
We say that an edge-colouring is \emph{partial} for $G$, if it is an edge colouring of a subgraph of $G$.
Throughout the colouring procedure, we have to keep track of the overall status of the colouring.
This is why the following definitions are stated in the context of lists $L(e)\subset \cL(e)$ of (yet unused) colours and a partial edge-colouring $\gamma$ of $G$.

\begin{definition}[Dangerous paths]
	Let $\gamma$ be a partial $1$-edge-colouring of $G$.
	Consider vertices $u,v$ and twin colours $c,c'$.
	We say that a path $P$ is \emph{dangerous} for $(u,v,c)$ under $\gamma$, if 
	\begin{itemize}
		\item $P$ starts with $u$ and ends with $v$  and
		\item the edges of $P$ are coloured alternately with $c$ and $c'$, starting and ending with $c'$.
	\end{itemize}
\end{definition}
To motivate this definition, consider a dangerous path $P$ for some $(u,v,c)$ such that $e=uv$ is an uncoloured edge with $c \in L(uv)$.
Observe that if $e$ is coloured with $c$, then $P+e$ would be a monochromatic cycle with respect to the original lists $\cL'$.
Such peril is not acceptable.
We will therefore monitor certain suspicious paths, which are, informally speaking, candidates for becoming dangerous.
During our colouring procedure, we remove $c$ from $L(e)$, whenever a dangerous path for $(u,v,c)$ emerges.

\begin{definition}[Suspicious paths]\label{def:suspicious}
	Let $\gamma$ be a partial $1$-edge-colouring of $G$, consider sublists $L(e)\subset \cL(e)$ of the product lists and let $c,c'$ be twin colours.
 	A path $P=(v_1,\dots,v_s)$ is \emph{suspicious} for $(u,v,c)$ if the following holds:
	\begin{itemize}
	\item $P$ starts with $u$, ends with $v$ and its last edge $v_{s-1}v_s$ is uncoloured.
 	\item If $v_iv_{i+1}$ is uncoloured, then 
		\begin{align*}
		L(v_iv_{i+1}) \ni
		\begin{cases}
		c' &\text{if $i$ is odd and}\\
		c &\text{if $i$ is even}.
		\end{cases}
	\end{align*}
 	\item If $v_iv_{i+1}$ is coloured, then 
	 	\begin{align*}
	 	\gamma(v_iv_{i+1})=	
	 	\begin{cases}
	 	c' &\text{if $i$ is odd and}\\
	 	c &\text{if $i$ is even}.
	 	\end{cases}
	 	\end{align*}
	\end{itemize}
	The \emph{uncoloured length} of $P$ is the number of its uncoloured edges.
	We let $P_{L,\gamma}(u,v,c;k)$ be the union of suspicious for $(u,v,c)$ paths of uncoloured length $k$ and set $P_{L,\gamma}(u,c;k) := \bigcup_{v \in G} P_{L,\gamma}(u,v,c;k)$.
\end{definition}

Note that preventing dangerous paths introduces new errors in the parameters that guide the original iterative colouring procedure.
Fortunately however, the overall effects of this alteration turns out to be negligible in comparison to other error terms.
This is because, firstly, only suspicious paths can become dangerous and, secondly, the number of suspicious paths is dominated by the reciprocal of the probability of a suspicious path becoming dangerous.
Let us now formalize this discussion.

As usual in this line of research, we can restrict our analysis to the neighbours that are relevant to specific colours.
\begin{definition}(Colour neighbours)
	Given  a partial $1$-edge-colouring $\gamma$ of $G$ from some lists $L(e)\subset \cL(e)$, we define the \emph{colour neighbours} of a vertex $v$ and a colour $c$, denoted by $N_{G,L,\gamma}(v,c)$, to be the set of uncoloured edges $f$ incident to $v$ with $c \in L(f)$.
\end{definition}
We begin by bounding the number of suspicious paths.
\begin{observation}\label{obs:number-suspicious}
	Let $\gamma$ be a partial $1$-edge-colouring of $G$ and consider sublists $L(e)\subset \cL(e)$ of the product lists.
	Suppose that there is a constant $N$ such that $|N_{G,L,\gamma}(w,c)| \leq N$ for every vertex $w$ and colour $c$.
	Then we have $|P_{L,\gamma}(u,v,c;k)|\leq N^{k-1}$ and $|P_{L,\gamma}(u,c;k)|\leq N^{k}$ for all vertices $u,v$ colours $c$ and integers $k$.
\end{observation}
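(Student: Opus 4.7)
The plan is to count suspicious paths by building them edge-by-edge starting from $u$ and bounding the number of extensions available at each step, according to whether the next edge is required to be coloured or uncoloured.

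First, I would note that a partial $1$-edge-colouring $\gamma$ is automatically proper in the sense that each vertex is incident to at most one edge of any given colour. Fix a suspicious path $P=(v_1,\ldots,v_s)$ with $v_1=u$ of uncoloured length $k$, and consider extending a prefix $(v_1,\ldots,v_i)$ by one edge. If $v_iv_{i+1}$ is required by Definition~\ref{def:suspicious} to be \emph{coloured} (with $c$ or $c'$, depending on the parity of $i$), then properness of $\gamma$ forces $v_{i+1}$ to be the unique other endpoint of the at most one edge at $v_i$ of that colour. If instead $v_iv_{i+1}$ is required to be \emph{uncoloured} with the specified colour in its list, then the edge lies in $N_{G,L,\gamma}(v_i,c)$ or $N_{G,L,\gamma}(v_i,c')$, and the hypothesis of the observation gives at most $N$ choices for $v_{i+1}$.

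Since $P$ has exactly $k$ uncoloured edges, multiplying the bounds on extensions over all $s-1$ steps gives at most $N^k$ suspicious paths (the coloured steps contributing factors of $1$), which proves $|P_{L,\gamma}(u,c;k)|\leq N^k$. For the fixed-endpoint version, the last edge $v_{s-1}v_s$ is uncoloured by definition and its endpoint is pinned to $v_s=v$. At this step the extension is either uniquely valid (if there is an uncoloured edge between $v_{s-1}$ and $v$ carrying the required colour in its list) or impossible, so it contributes at most one valid choice rather than $N$, yielding $|P_{L,\gamma}(u,v,c;k)|\leq N^{k-1}$.

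This is really a routine counting exercise and I do not anticipate any genuine obstacle; the only subtlety worth flagging is recognising that properness of $\gamma$ is precisely what collapses each coloured-edge step to a uniquely determined move, while the colour-neighbour hypothesis controls the uncoloured steps, and the factor-of-$N$ saving in the fixed-endpoint bound is simply bookkeeping at the final edge.
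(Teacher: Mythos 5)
Your approach is the same in spirit as the paper's (build the path outward from $u$, charging a factor of at most $N$ to each uncoloured edge and $1$ to each coloured edge), but the way you justify the multiplication hides the one genuinely subtle point. When you extend a prefix $(v_1,\ldots,v_i)$, the definition of a suspicious path does not tell you whether the next edge is coloured or uncoloured — that is determined by $\gamma$, and in principle $v_i$ could admit \emph{both} a coloured extension (the unique edge of colour $c$ or $c'$ at $v_i$) \emph{and} up to $N$ uncoloured extensions (edges with that colour still in their list). If both were possible, your product argument would not give $N^k$: the coloured segments could have any length, so you would be summing $N^k$ over all patterns of coloured/uncoloured steps, and the count would blow up with the length $s$ rather than being bounded by $N^k$. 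The bound $N^k$ is only correct because this branching never occurs: in the setting where the observation is applied, a vertex incident to an edge already coloured $c$ has no uncoloured incident edge with $c$ in its list (this is maintained by step~\ref{step:update-lists-and-coinflip} of the procedure, which strips $c$ from $L(f)$ for every $f\in N(v,c)$ once some edge at $v$ retains $c$). Under that invariant the coloured segments are forced to be maximal and the pattern of coloured/uncoloured steps is uniquely determined by the uncoloured choices, so the product does collapse to $N^k$. The paper's proof sidesteps the issue by inducting on $k$ and extending at each stage by the \emph{unique maximal} coloured alternating path followed by one uncoloured edge, which makes the forced structure visible; your edge-by-edge formulation needs to say explicitly why a vertex cannot offer both kinds of extension, or the argument as written does not establish $N^k$.
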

\begin{proof} 
	We show the observation for $P_{L,\gamma}(u,c;k)$ by induction on the uncoloured length $k$.
	The base case $k=1$ is simple.
	Since $\gamma$ is a partial $1$-edge-colouring of $G$, there is a unique edge maximal path $P$ starting from $u$, which is coloured with $c$ and $c'$ starting with $c'$.
	Suppose first that the last edge of $P$ is coloured with $c$.
	By assumption there are at most $N$ possibilities to continue $P$ with an edge whose list contains $c'$.
	Of course, the same conclusion holds when the last edge of $P$ is coloured with $c'$.
	We have thus verified the induction start $|P_{L,\gamma}(u,c;1)|\leq N$.
	For the induction step, observe that, for $k\geq 2$, every path of $P_{L,\gamma}(u,c;k)$ is the extension of a path of $P_{L,\gamma}(u,c;k-1)$.
	Using the same argument as before, we see that every path of $P_{L,\gamma}(u,c;k-1)$ extends to at most $N$ paths from $P_{L,\gamma}(u,c;k)$.
	Hence, by induction hypothesis $|P_{L,\gamma}(u,c;k)|\leq N^{k}$.
	
	The case of $P\in P_{L,\gamma}(u,v,c,k)$ is proved analogously.
	However, as there is no choice for the last vertex (which has to be $v$), we obtain $|P_{L,\gamma}(u,v,c,k)|\leq N^{k-1}$.
\end{proof}
The next observation teases the start of our random colouring procedure and bounds the probability of a suspicious path becoming dangerous.
\begin{observation}\label{obs:probability-dangerous}
	Let $\gamma$ be a partial $1$-edge-colouring of $G$ from some lists $L(e)\subset \cL(e)$.
	Let $0 \leq \p \leq 1$ and $L$ be an integer such that $|L(e) |= L$ for every edge $e$.
	Let  $\gamma'$ be an extension of $\gamma$ obtained as follows:
	\begin{enumerate}[\upshape(I)]
		\item   {Activate} each uncoloured edge with probability $\p$.
		\item Assign to each activated edge $e$ a colour chosen uniformly at random from $L(e)$.
	\end{enumerate}
	Then the probability that a suspicious path $P \in P_{L,\gamma}(u,v,c;k)$ is dangerous for $(u,v,c)$ under $\gamma'$ is $(\p/L)^{k}$.
\end{observation}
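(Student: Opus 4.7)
The plan is to unwind the definitions and observe that the relevant events are independent across the $k$ uncoloured edges of $P$. Write $P = (v_1, \ldots, v_s)$ so that $P$ has $s-1$ edges in total. By the suspicious condition, every already-coloured edge $v_iv_{i+1}$ of $P$ already satisfies $\gamma(v_iv_{i+1}) = c'$ when $i$ is odd and $\gamma(v_iv_{i+1}) = c$ when $i$ is even; that is, the coloured portion of $P$ already realises the alternating $c', c, c', \ldots, c'$ pattern required of a dangerous path for $(u,v,c)$. Hence the event ``$P$ is dangerous under $\gamma'$'' coincides with the event that each of the $k$ uncoloured edges $v_iv_{i+1}$ of $P$ receives under $\gamma'$ its prescribed colour $\tilde{c}_i$, where $\tilde{c}_i = c'$ for odd $i$ and $\tilde{c}_i = c$ for even $i$.

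Next I would compute the probability for a single uncoloured edge $v_iv_{i+1}$. The suspicious condition guarantees $\tilde{c}_i \in L(v_iv_{i+1})$. In step (I) the edge is activated with probability $\p$, and conditional on activation, step (II) assigns it the colour $\tilde{c}_i$ with probability exactly $1/|L(v_iv_{i+1})| = 1/L$, since all lists have size $L$. Therefore the marginal probability that $\gamma'(v_iv_{i+1}) = \tilde{c}_i$ equals $\p/L$.

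Finally I would invoke the independence that is built into the random procedure: the activation variables in step (I) are mutually independent Bernoulli$(\p)$ trials, and the uniform colour draws in step (II) are mutually independent of each other and of the activations. Since the $k$ uncoloured edges of $P$ are distinct, the $k$ events ``$\gamma'(v_iv_{i+1}) = \tilde{c}_i$'' are jointly independent, and multiplying their probabilities yields $(\p/L)^k$, as claimed.

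The statement is essentially a one-line consequence of the definitions and the independence built into the procedure, so I do not anticipate any substantive obstacle; the only point demanding a small amount of care is to verify that the pattern required by \emph{suspicious} (in Definition~\ref{def:suspicious}) matches the pattern required by \emph{dangerous} on each edge individually, which is immediate from the parity conditions in the two definitions.
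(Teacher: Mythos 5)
Your proof is correct and follows essentially the same approach as the paper's: compute the per-edge probability $\p/L$ of an uncoloured edge being activated and assigned its prescribed colour, then multiply across the $k$ uncoloured edges using the independence built into steps (I) and (II). The only difference is that you spell out the parity check matching Definition~\ref{def:suspicious} against the dangerous-path pattern, which the paper treats as immediate; this is careful but not a different argument.
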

\begin{proof} 
	The probability that an edge $e$ is activated and assigned some colour $\alpha \in L(e)$ is $\p/L$.
	Since activations and colour assignments are independent and $P$ has uncoloured length $k$, the observation follows.
\end{proof}
In practise, we can assume that the parameters $N$ and $L$ of Observation~\ref{obs:number-suspicious} and~\ref{obs:probability-dangerous} satisfy $L \geq N$ and $\p$ is an absolute constant less than one.
Hence under the assumptions of Observation~\ref{obs:number-suspicious} and~\ref{obs:probability-dangerous}, the expected number of suspicious paths around an edge $uv$ under $\gamma$ that become dangerous under $\gamma'$ (defined as in Observation~\ref{obs:probability-dangerous}) is
\begin{align*}
	\sum_{c \in L(uv)} \sum_{k\geq 1} \left(\frac{\p}{ L}\right)^k  |P_{L,\gamma}(u,v,c;k)| \leq L \sum_{k\geq1} \left(\frac{\p}{ L}\right)^{k} N^{k-1} = \p \sum_{k\geq0} \left(\frac{\p N}{ L}\right)^k  = \frac{\p}{1-\p N/L} =O\left(1\right).
\end{align*}
The reason why we are using an activation probability $\p$ in our procedure is that it allows us to conveniently bound the term in the last step.
Figuratively speaking, the activations result in slowing down the iterative colouring procedure, allowing for more precision when preventing dangerous paths.
In any case, by Markov's inequality we can guarantee that with high probability, at most $O(1)$ dangerous paths emerge for the edge $uv$.
This term is dominated by the error terms of order $O({\sqrt{L}})$ arising from the (inevitable) concentration analysis elsewhere in the procedure.	
Hence, deleting the respective colours from $L(uv)$ and uncolouring $uv$ (if necessary) does not have a significant effect on the remainder of the analysis.

We remark that in the actual proof, a slight variation of this idea is used.
For the application of probabilistic tools such as  the Lov\'asz Local Lemma and Talagrand's inequality (see Section~\ref{sec:cycle-tweak} and~\ref{sec:uniform-bounds}), we will need to control the dependencies of events related to an edge $uv$.
For this purpose, it will be convenient to consider only suspicious paths $P(u,v,c;k)$ up to a bounded number of uncoloured edges $k < \ell$.
After this point, we simply keep track of the suspicious paths with $\ell$ uncoloured edges starting (but not necessarily ending) in $u$ or $v$, more precisely $P(u,c;\ell) \cup P(v,c;\ell)$.
This cut-off comes at the cost of a factor of $N$ in the number of suspicious paths (as seen in Observation~\ref{obs:number-suspicious}).
However, a straight forward calculation shows that the overall error is still bounded as above, provided that $\p/ {L} \geq \p^{2\ell}$.
(The details can be found in Section~\ref{sec:cycle-tweak}.)
Thus, in practise we can take $\ell = 2\log \Delta \geq 2\log N$ and $p =1/4$, where $\Delta$ is the maximum degree of $G$.
Having discussed the differences to the usual colouring procedure, let us continue with the proof of Theorem~\ref{thm:main}.

\subsection{Adapting former work}\label{sec:adapting-former-work}
In the following, we describe the details of the iterative random colouring procedure.
Apart from the above discussed modification, the proof is essentially the same as in the setting of ordinary $1$-edge-colourings.
Our exposition therefore follows closely the one presented in the textbook of Molloy and Reed~\cite{MR13} on this topic.

As a reminder, the iterative random colouring procedure is carried out as follows.
In each step,  pick a random colour from the list of (still available) colours for each uncoloured edge.
Then uncolour all arising conflicts.
It can be shown that with positive probability, this results in a  well-behaved partial $1$-edge-colouring allowing for the repetition of this process until most edges of the graph have been coloured.
Once this is done, we can complete the colouring with the following lemma of Alon~\cite{Alo93} using a fresh set of colours that was reserved beforehand.
(We will apply Lemma~\ref{lem:finisher} with the original lists $\cL'$. So trivially, no new monochromatic cycles are generated.)
\begin{lemma}[Finishing blow]\label{lem:finisher}
	Let $G$ be a graph with an assignment of lists $L(e)$ to the edges such that $|L(e)| \geq L$ for some constant $L$.
	Suppose that there is a constant $N$ such that for every edge $e$ and colour $c$, there are at most $N$ edges $f$ incident with $e$ such that $c \in L(f)$.
	If $L \geq 8 N$, then $G$ has a $1$-edge-colouring from the lists $L$.
\end{lemma}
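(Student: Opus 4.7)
I would prove this by the symmetric Lov\'asz Local Lemma. After replacing each $L(e)$ by an arbitrary $L$-subset (which only tightens the hypothesis since the condition on colour-neighbourhoods is monotone), pick $c(e) \in L(e)$ uniformly and independently for every edge. The decisive choice is the bad events: rather than a single conflict event per edge, I would introduce, for every pair of incident edges $e, f$ and every colour $c \in L(e) \cap L(f)$, the event $B_{e,f,c}$ that $c(e) = c = c(f)$. Avoiding every such $B_{e,f,c}$ is exactly the condition that the random colouring is proper, and each satisfies $\Pr[B_{e,f,c}] = 1/L^2$.

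For dependencies, observe that $B_{e,f,c}$ depends only on the two variables $c(e), c(f)$, hence is mutually independent of every $B_{e',f',c'}$ with $\{e,f\} \cap \{e',f'\} = \emptyset$. Fixing an edge $g$, the number of triples $(g,h,c')$ with $h$ incident to $g$ and $c' \in L(g) \cap L(h)$ is bounded by reversing the order of summation:
\[
\sum_{h \sim g} |L(g) \cap L(h)| \;=\; \sum_{c' \in L(g)} \big|\{h \sim g : c' \in L(h)\}\big| \;\leq\; L \cdot N,
\]
where the last inequality uses the colour-neighbourhood hypothesis. Summing over the two choices $g \in \{e,f\}$ shows that $B_{e,f,c}$ is dependent on at most $2LN$ other events.

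The symmetric LLL condition $4\Pr[B_{e,f,c}] \cdot D \leq 1$ then becomes $4 L^{-2} \cdot 2LN = 8N/L \leq 1$, which is precisely the hypothesis $L \geq 8N$. Hence with positive probability no $B_{e,f,c}$ occurs and the random colouring is a valid $1$-edge-colouring of $G$ from $L$. The only real conceptual obstacle is the indexing of bad events: the naive per-edge event $A_e = \{c(e) = c(f) \text{ for some } f \sim e\}$ has probability at most $N/L$, but its shared-variable dependency grows like $L^2 N^2$ (since $A_e$ now reads the colour of every incident edge whose list meets $L(e)$, and these form a set of size up to $LN$), giving $pD$ of order $LN^3$ and no useful bound. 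Splitting the conflict colour by colour saves a factor of $L$ in both the probability and the dependency, and this is what makes the threshold $L \geq 8N$ work out.
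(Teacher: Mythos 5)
Your proof is correct, and it is the standard Lov\'asz Local Lemma argument; the paper does not reprove this lemma but cites Alon~\cite{Alo93}, whose argument is essentially the same colour-by-colour indexing of bad events. Your dependency count via the double-counting swap $\sum_{h\sim g}|L(g)\cap L(h)|=\sum_{c'\in L(g)}|\{h\sim g: c'\in L(h)\}|\le LN$ and the check $4\cdot L^{-2}\cdot 2LN=8N/L\le 1$ are exactly right, and your closing remark about why the naive per-edge event has dependency degree of order $(LN)^2$ and hence fails is the correct explanation of why one must split the conflict colour by colour.
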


In the rest of this section, we discuss the colour reservations (Lemma~\ref{lem:reserved-colours}), introduce a set of parameters to guide the iterative colouring process (Setup~\ref{setup} and Lemma~\ref{lem:sizes}) and formulate a claim that captures the status of the colouring before and after each step (Claim~\ref{cla:nibbler}).
Finally, we combine these results to prove Theorem~\ref{thm:main}.

\paragraph{Reservations.}
As mentioned above, we will set a few colours aside, which shall be used to finish the colouring later on.
For each vertex $v$, we select a set of colours $\Res(v)$ from the lists of edges incident to $v$.
The next lemma states that we can choose the sets $\Res(v)$ in a well-distributed way.
The proof is a standard application of concentration inequalities (Chernoff's bound).

\begin{lemma}[Reserve colours {\cite[Lemma 14.6]{MR13}}]\label{lem:reserved-colours}
	For $\Delta$ large enough the following holds.
	Let $G$ be a graph with an assignment of lists $\mathcal{L}$ such that $|\cL(e)| \geq \Delta+ 6\sqrt{\Delta} \log^4 \Delta.$
	Then, for each vertex $v$, we can choose $\Res(v) \subset \mathcal{L}(v)$ such that for every edge $e=uv$ and colour $c \in \Res(v)$:
	\begin{enumerate}[\upshape (a)]
		\item $|\cL(e) \cap (\Res(u) \cup \Res(v))| \leq 3 \sqrt{\Delta} \log^4 \Delta$,
		\item $|\cL(e) \cap (\Res(u) \cap \Res(v))| \geq  \log^8 \Delta/2$, and
		\item $|\{u \in N(v) \colon c \in \Res(u)\cap \mathcal{L}(uv)\}| \leq 2 \sqrt{\Delta} \log^4 \Delta$.
	\end{enumerate}
\end{lemma}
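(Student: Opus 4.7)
The plan is to select each $\Res(v)$ randomly and then verify the three conditions via Chernoff bounds combined with the Lov\'asz Local Lemma. Write $\cL(v) := \bigcup_{e \ni v} \cL(e)$ and set $\p := \log^4 \Delta / \sqrt{\Delta}$. After discarding excess colours so that $|\cL(e)| = \Delta + 6\sqrt{\Delta}\log^4\Delta$ for every edge $e$ (which only strengthens the upper bounds (a) and (c)), I would independently include each colour $c \in \cL(v)$ in $\Res(v)$ with probability $\p$, with the choices made independently across vertices and colours.

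For a fixed edge $e=uv$, each of $|\cL(e) \cap \Res(u)|$ and $|\cL(e) \cap \Res(v)|$ is a sum of $|\cL(e)|$ independent Bernoulli$(\p)$ indicators with mean $|\cL(e)|\p = \sqrt{\Delta}\log^4\Delta + 6\log^8\Delta$. Thus the union in (a) has expectation at most $2|\cL(e)|\p$, and a Chernoff upper tail bounds the probability of exceeding $3\sqrt{\Delta}\log^4\Delta$ by $\exp(-\Omega(\sqrt{\Delta}\log^4\Delta))$. The intersection in (b) is a sum of $|\cL(e)|$ independent Bernoulli$(\p^2)$ variables with mean at least $\log^8\Delta$, so a lower-tail Chernoff bound with $\delta = 1/2$ shows (b) fails with probability at most $\exp(-\Omega(\log^8\Delta))$. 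Finally the quantity in (c) is a sum of at most $\Delta$ independent Bernoulli$(\p)$ indicators (one per $u \in N(v)$ with $c \in \cL(uv)$, the reserves at different neighbours being independent), of mean at most $\sqrt{\Delta}\log^4\Delta$, and a Chernoff upper tail at threshold $2\sqrt{\Delta}\log^4\Delta$ gives failure probability $\exp(-\Omega(\sqrt{\Delta}\log^4\Delta))$.

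To assemble these bounds I would apply the LLL to the collection of bad events: one per edge for (a) and (b), and one per vertex-colour pair for (c). Each such event depends only on the random choices at one or two vertices (the endpoints of the edge, or the neighbours of the vertex), so two events are dependent only when their underlying vertices lie within graph-distance two, producing a dependency degree that is at most $\mathrm{poly}(\Delta)$. Since each failure probability decays super-polynomially in $\Delta$, the LLL criterion is comfortably satisfied for $\Delta$ large, and a valid family $\{\Res(v)\}_v$ exists with positive probability.

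The main obstacle to watch is (b): its mean of $\log^8\Delta$ gives only an $\exp(-\Omega(\log^8\Delta))$ tail, by far the weakest probability. The choice $\p = \log^4\Delta/\sqrt{\Delta}$ is essentially forced — decreasing $\p$ to further soften (a) or (c) would push the mean in (b) below $\mathrm{poly}(\log\Delta)$, at which point the corresponding lower-tail Chernoff bound would no longer overcome the polynomial dependency degree in the LLL.
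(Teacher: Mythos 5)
Your approach --- independent random reservation of each colour at each vertex with probability $\log^4\Delta/\sqrt{\Delta}$, Chernoff bounds for each of (a)--(c), and the Lov\'asz Local Lemma to obtain a simultaneous good outcome --- is exactly the standard Molloy--Reed argument that the paper defers to in citing \cite[Lemma~14.6]{MR13}, and your parameter analysis is right, with (b) as the bottleneck dictating the reservation probability. The one point to tighten is the parenthetical truncation remark: after shrinking each $\cL(e)$ to a sublist $\cL'(e)$ of size exactly $\Delta+6\sqrt\Delta\log^4\Delta$ and drawing $\Res(v)\subset\bigcup_{f\ni v}\cL'(f)$, a discarded colour $c\in\cL(e)\setminus\cL'(e)$ can still land in $\Res(u)$ via a different edge $f\ni u$ with $c\in\cL'(f)$, so bound (a) for the original $\cL(e)$ is not automatically inherited from (a) for $\cL'(e)$. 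The clean fix is simply to prove the lemma for lists of exactly this size and then carry out Section~\ref{sec:nibble} with the truncated lists $\cL'$ in place of $\cL$, which costs nothing downstream.
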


Given the sets $\Res(v)$, we define for each edge $e=uv$,
\begin{align}
L_0(e) &= \mathcal{L}(e) \sm (\Res(u) \cup \Res(v)) \text{ and} \label{equ:def-Le} \\ 
\Res(e)&= \mathcal{L}(e) \cap (\Res(u) \cap \Res(v)).\label{equ:def-Rservee}
\end{align} 
During the iterative colouring procedure, we colour $G$ only from colours of $L_0(e)$.
In the final step, we colour the remainder of the graph from the colours of $\Res(e)$ using Lemma~\ref{lem:finisher}.
To this end, we need to monitor the relation between uncoloured edges and the reserved colours.
\begin{definition}\label{def:Rvc-tracking}
	For a partial edge-colouring  $\gamma$ of $G$, we let $R_{\gamma}(v,c)$ be the set of uncoloured edges $uv$ with  $c \in \Res(u)$.
\end{definition}
This concludes the details for the reservation of colours.

\paragraph{Parameters.}
Next, we define a number of parameters used to track the sizes of the lists of (still available) colours and other objects throughout the iterative colouring procedure.
\begin{setup}\label{setup}
	Let $\p =1/4$ and $\l= 2 \log \Delta$ and define recursively:
	\begin{align*}
	L_0 &= \Delta+ {6\sqrt{\Delta} \log^4 \Delta},	 & L_{i+1} &= L_i \cdot \K_i^2  - \sqrt{L_i} \log^2 \Delta,
		\\ N_0 &= \Delta, & N_{i+1} &= N_i \cdot  \K_i  \cdot \left(1- \p \R_i^2   \right) + \sqrt{N_i}  \log^2 \Delta,
		\\R_0 &= 2\sqrt{\Delta} \log^4 \Delta, & R_{i+1} &= R_i \cdot \left(1- \p \R_i^2   \right)   + \sqrt{R_i}\log^2 \Delta, 
		\\ 	\R_i &= \left(1-{\frac{\p}{L_i}}\right)^{N_i-1}, & \K_i &=  1- \p  \frac{N_i}{L_i} \cdot \R_i^2 \text{ and}
		\\ i_0 &= \min \{i \colon L_i < 3 \log^7 \Delta \}.
	\end{align*}
\end{setup}
We note that in the final procedure $\R_i^2$ will be the probability that an edge {retains} an assigned colour and $\K_i^2$ will be the probability that the list of an edge {keeps} a particular colour.
The following lemma follows from a  straightforward calculation. We omit the proof. 
\begin{lemma}[{Molloy and Reed~\cite[Lemma 7]{MR00}}]\label{lem:sizes}
	Suppose that $\Delta$ is sufficiency large and consider $R_{i}, L_{i}, N_{i},i_0$ as in Setup~\ref{setup}.
	Then it follows that $L_{i_0}, N_{i_0}, R_{i_0}  > \log^7 \Delta$, $R_{i_0} \leq 3 \log^{7.5} \Delta$, $R_{i}/L_{i} \leq \log \Delta$, and $L_i > N_i > L_i/2$ for each $i \leq i_0$.
\end{lemma}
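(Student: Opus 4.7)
The plan is to prove Lemma~\ref{lem:sizes} by induction on $i$, simultaneously maintaining the invariants $L_i/2 < N_i \leq L_i$, $R_i/L_i \leq \log \Delta$, and $L_i \geq 3\log^7\Delta$ for $0 \leq i \leq i_0$. The key observation is that when $N_i/L_i$ is close to $1$, the factor $\K_i = 1 - \p(N_i/L_i)\R_i^2$ is close to $1 - \p\R_i^2$, so that $L_i$ decays essentially like the square of how fast $R_i$ decays; this makes the ratios $R_i/L_i$ and $N_i/L_i$ controllable throughout the iteration.

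First I would analyse the ratio $a_i := N_i/L_i$. Ignoring the additive error terms, the leading-order iteration is $a_i \mapsto a_i (1-\p\R_i^2)/\K_i$, and since
\begin{align*}
\frac{1 - \p \R_i^2}{\K_i} \;=\; \frac{1 - \p \R_i^2}{1 - \p a_i \R_i^2} \;\leq\; 1
\end{align*}
with equality iff $a_i = 1$, the value $a = 1$ is an attracting fixed point from below. The additive corrections $+\sqrt{N_i}\log^2\Delta$ for $N$ and $-\sqrt{L_i}\log^2\Delta$ for $L$ push $a_i$ further upward and dominate the leading-order contraction near $a = 1$; combined, these effects keep $a_i$ within $o(1)$ of $1$ as long as $L_i \geq \log^7\Delta$, which in particular gives $L_i/2 < N_i \leq L_i$.

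Once we know $a_i \in [1/2, 1]$, the quantity $\R_i = (1-\p/L_i)^{N_i-1}$ is bounded strictly between two positive absolute constants (concretely roughly $e^{-1/4}$ and $e^{-1/8}$) for $L_i$ large, so $\K_i$ is bounded away from $1$ by an absolute constant. Then $L_{i+1} \leq \K_i^2 L_i$ forces geometric decay, giving $i_0 = O(\log\Delta)$, and the minimality of $i_0$ combined with one step of the recurrence immediately yields $L_{i_0} > \log^7\Delta$ (a single step scales $L$ by a constant, so $L_{i_0}$ cannot lie much below $L_{i_0-1} \geq 3\log^7\Delta$). The bound $N_{i_0} > L_{i_0}/2 > \log^7\Delta/2$ then follows from the first invariant.

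For the bounds on $R_i$, I would exploit the near-conservation of $R_i^2/L_i$: since $(1-\p\R_i^2) \approx \K_i$ when $a_i \approx 1$, the leading-order telescoping preserves this quantity up to accumulated additive errors of order $\log^2\Delta \sum_{j \leq i} 1/\sqrt{L_j}$, which sum geometrically and are negligible. Hence $R_i/L_i \approx (R_0/L_0)\sqrt{L_0/L_i}$; plugging in $R_0 = 2\sqrt{\Delta}\log^4\Delta$, $L_0 = \Theta(\Delta)$, and $L_i \geq 3\log^7\Delta$ yields $R_i/L_i \leq \log\Delta$ and the claimed terminal bound on $R_{i_0}$. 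The main obstacle will be the bookkeeping of accumulated additive errors near $i = i_0$, where the error term $\sqrt{L_i}\log^2\Delta$ becomes comparable to the multiplicative term $L_i\K_i^2$ and one must verify that the cumulative noise does not spoil the stated bounds; this is the tedious but routine part of the argument deferred by the authors to Molloy and Reed~\cite{MR00}.
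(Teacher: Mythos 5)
Your overall approach---tracking the ratio $a_i = N_i/L_i$, exploiting the near-conservation of $R_i^2/L_i$, and bounding accumulated additive errors---is essentially the same as the one in Molloy and Reed (which the paper cites and, in its appendix, augments only for the extra $R_i/L_i$ bound by comparing to error-free reference sequences $L_i^*,N_i^*,R_i^*$). So the strategy is the right one, and the key identity you rely on ($L_i^* = N_i^*$ and hence $\K_i^* = 1-\p\R_i^2$, which is what makes $R_i^2/L_i$ conserved at leading order) is exactly the mechanism the paper uses.

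However, there is a genuine gap in your justification of $N_i < L_i$. You say the additive corrections $+\sqrt{N_i}\log^2\Delta$ and $-\sqrt{L_i}\log^2\Delta$ ``push $a_i$ further upward and dominate the leading-order contraction near $a=1$.'' Taken at face value this would force $a_i$ to rise \emph{past} $1$: if $a_i=1$ then $\K_i = 1-\p\R_i^2$ and
\begin{align*}
a_{i+1} \;=\; \frac{L_i\K_i^2 + \sqrt{L_i}\log^2\Delta}{L_i\K_i^2 - \sqrt{L_i}\log^2\Delta} \;>\; 1,
\end{align*}
so the additive terms alone drive $N_{i+1}$ above $L_{i+1}$. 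What actually keeps $a_i < 1$ is not that the corrections dominate, but that the initial gap $L_0 - N_0 = 6\sqrt{\Delta}\log^4\Delta$ is a buffer chosen large enough to absorb the cumulative effect of the per-step $\sqrt{L_i}\log^2\Delta$ errors over all $O(\log\Delta)$ rounds. Concretely, the paper (following Molloy--Reed) proves inductively that $L_i \ge L_i^* - \sqrt{L_i^*}\log^{2.5}\Delta$ and $R_i \le R_i^* + \sqrt{R_i^*}\log^{2.5}\Delta$, where the extra half power of $\log$ is precisely what is needed for the inductive step to absorb the $\sqrt{\cdot}\log^2\Delta$ increment; the same bookkeeping keeps $N_i$ between $L_i^* \pm \sqrt{L_i^*}\log^{2.5}\Delta$, and since the initial cushion $6\sqrt{\Delta}\log^4\Delta$ strictly dominates $2\sqrt{L_i^*}\log^{2.5}\Delta$, one gets $N_i < L_i$. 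Your write-up does not carry out this step, and the qualitative claim you make in its place points in the wrong direction. Otherwise the proposal is on track (note also that the lemma's stated $R_{i_0} \le 3\log^{3.5}\Delta$ must be $3\log^{7.5}\Delta$ --- see the use ``$2R_{i_0} \le 6\log^{7.5}\Delta$'' in the text --- which is consistent with your conservation estimate).
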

We remark that the parameters $R_i$ do not appear in the original statement, but are added (under the name of $T_i$) later on in the discussion on list colouring~\cite[Section 5]{MR00}.
In a similar vain, the inequality $R_{i}/L_{i} \leq \log \Delta$ also does not appear in the original statement, but easily follows from the proof.
(See Appendix~\ref{sec:sizes-R/L} for further elaboration.)
Moreover, Molloy and Reed computed these bounds for $\p=1$, but the same proof and its conclusion hold for $\p =1/4$.
 
\paragraph{Iterations.}
During the iterative colouring procedure, we will track the (shrinking) lists of colours and colouring itself in order to ensure that no dangerous paths arise.
The next claim presents the heart of the procedure.

The following definitions help us with the bookkeeping.
A partial $1$-edge-colouring $\gamma$ of $G$ from $\cL(e)$ is called \emph{twin-acyclic} if it does not induce any cycle coloured (alternately) with twins.
We say that lists $L(e) \subset \cL(e)$ are \emph{$\gamma$-compatible} if both of the following hold:
\begin{itemize}
	\item if $\gamma(f)=c$, then $c \notin L(e)$ for all incident edges $e$ and $f$,
	\item if an edge $e=uv$ is not coloured by $\gamma$ and there exists a dangerous path for $(u,v,c)$ under $\gamma$, then $c\not\in L(e)$.
\end{itemize}
Another edge-colouring $\gamma'$ of $G$ \emph{extends} $\gamma$, if every edge coloured by $\gamma$ is coloured by $\gamma'$ in the same way.
Similarly, the lists $L(e)$ \emph{extend} lists $L'(e) \subset \cL(e)$, if $L(e) \subset L'(e)$ for all edges $e$ of $G$.

\begin{claim}[Single colouring step]\label{cla:nibbler}
	Given Setup~\ref{setup} and $\Delta$ large enough, the following holds for every $0 \leq i < i_0$.
	Suppose that there is a twin-acyclic $1$-edge-colouring $\gamma_i$ of $G$ and $\gamma_i$-compatible lists of colours $L_i(e) \subset \cL(e)$ with the following properties:
	For every uncoloured edge $e=uv$ and colour $c \in L_i(e)$, we have
	\begin{enumerate}[\upshape (a)]
		\item \label{itm:list-size-before}  $|L_i(e)| = L_i$, and
		\item \label{itm:colour-neighbours-size-before}  $|N_{G,L_i,\gamma_i}(v,c)| \leq N_i$, and
		\item \label{itm:reserved-size-before}  $|R_{\gamma_i}(v,c)| \leq R_i$, and
		\item \label{itm:cycle-conflict-before} there is no dangerous path for $(u,v,c)$ under $\gamma_i$.
	\end{enumerate}
	
	Then it holds that: there exist a twin-acyclic $1$-edge-colouring $\gamma_{i+1}$ of $G$ from $\cL(e)$ and $\gamma_{i+1}$-compatible lists of colours $L_{i+1}(e)$ such that $L_{i+1}(e),\gamma_{i+1}$  {extend} $L_{i}(e),\gamma_{i}$.
	Moreover, for every uncoloured edge $e=uv$ and colour $c \in L_{i+1}(e)$, we have 
	\begin{enumerate}[\upshape (a$'$)]
		\item \label{itm:list-size-after} $|L_{i+1}(e)| = L_{i+1}$, 
		\item $|N_{G,L_{i+1},\gamma_{i+1}}(v,c)| \leq N_{i+1}$, 
		\item \label{itm:reserved-size-after} $|R_{\gamma_{i+1}}(v,c)| \leq R_{i+1}$, and
		\item \label{itm:cycle-conflict-after} there is no dangerous path for $(u,v,c)$ under  $\gamma_{i+1}$.
	\end{enumerate}
\end{claim}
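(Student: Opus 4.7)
The plan is to prove Claim~\ref{cla:nibbler} by the standard one-step nibble analysis, adapted with the dangerous-path filter described in the preceding subsection. The procedure activates each uncoloured edge independently with probability $\p = 1/4$, and assigns each activated edge $e$ a colour chosen uniformly at random from $L_i(e)$ (this matches Observation~\ref{obs:probability-dangerous}). Call an activated edge $e$ \emph{retained} if no other activated edge sharing an endpoint with $e$ received the same colour, and let $\gamma_{i+1}$ extend $\gamma_i$ by colouring each retained edge. I would then build $L_{i+1}(e)$ from $L_i(e)$ by removing every colour $c$ that either (i) was retained on some edge incident to $e$, which is the usual Molloy--Reed rule, or (ii) would complete a dangerous path for $(u,v,c)$ or $(v,u,c)$, tracked via suspicious paths of uncoloured length less than $\l$ together with the boundary paths in $P_{L_i,\gamma_i}(u,c;\l) \cup P_{L_i,\gamma_i}(v,c;\l)$. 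Properties (a$'$) and (d$'$) then hold by construction, trimming lists down to exactly $L_{i+1}$ where necessary.

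For expected values I follow Molloy and Reed. A colour $c \in L_i(e)$ survives at the endpoint $v$ against (i) iff it is not retained on any $f \in N_{G,L_i,\gamma_i}(v,c) \setminus \{e\}$; the resulting one-endpoint survival probability is $\K_i = 1 - \p(N_i/L_i)\R_i^2$, and by independence between the two endpoints the overall survival probability is $\K_i^2$. Thus $\Exp[|L_{i+1}(e)|]$ is at least $L_i \K_i^2$ before trimming, with analogous computations giving $\Exp[|N_{G,L_{i+1},\gamma_{i+1}}(v,c)|] \approx N_i \K_i (1-\p \R_i^2)$ and $\Exp[|R_{\gamma_{i+1}}(v,c)|] \approx R_i(1-\p \R_i^2)$, matching Setup~\ref{setup}. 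For the additional (ii)-removals, Observations~\ref{obs:number-suspicious} and~\ref{obs:probability-dangerous} together with the cut-off at $\l$ give an expected loss of $O(1)$ colours per edge, which is dwarfed by the $\sqrt{L_i}\log^2 \Delta$ slack already budgeted in the recursion.

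The final step is concentration plus the Lov\'asz Local Lemma. For each tracked quantity at each relevant edge or vertex-colour pair, I would bound its deviation from its expectation via Talagrand's inequality: the random variable is a function of the independent activation/colour choices on edges within distance roughly $\l$ of the relevant vertex or edge, with bounded Lipschitz and certificate constants. The localization bounds the dependency degree of the resulting bad events by a polynomial in $\Delta$ (roughly $\Delta^{O(\l)}$), whereas each bad event has probability at most $\exp(-\Omega(\log^3 \Delta))$ because the error threshold is of order $\sqrt{L_i}\log^2 \Delta$. The LLL hypothesis $4pd\leq 1$ is then easily met, and a random outcome simultaneously avoiding all bad events delivers lists $L_{i+1}(e)$ and a colouring $\gamma_{i+1}$ satisfying (a$'$)--(d$'$).

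The principal obstacle, already flagged in the preceding subsection, is keeping the dangerous-path filter strong enough to enforce (d$'$) yet localized enough for LLL and Talagrand. This is precisely what the cut-off $\l = 2\log\Delta$ and the activation probability $\p = 1/4$ buy: the cut-off bounds the dependency radius, and $\p$ ensures that the geometric sum $\sum_k (\p N_i/L_i)^k$ controlling the expected number of suspicious paths becoming dangerous is summable. Beyond length $\l$ one pays an extra factor of $N_i$ in the path count per Observation~\ref{obs:number-suspicious}, but the inequality $\p/L_i \geq \p^{2\l}$ absorbs this into the same $O(1)$ bound; the detailed arithmetic is the content of Section~\ref{sec:cycle-tweak}.
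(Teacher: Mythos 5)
Your overall skeleton matches the paper's: activate with probability $\p$, assign random colours, uncolour conflicts, additionally strip colours that would complete a dangerous path using suspicious paths truncated at $\l$, then combine Talagrand with the Lov\'asz Local Lemma. The broad expectation and concentration budget you describe is also correct, and the role of $\p$ and the cut-off $\l=2\log\Delta$ in taming the dangerous-path count and the LLL dependency radius is exactly the paper's story.

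However there are two concrete gaps in the expectation analysis, both inherited from the Molloy--Reed machinery that you invoke but do not reproduce. First, you omit the \emph{equalizing coin flips} $\Eq(e,c)$ and $\Vq(v,c)$: in the paper's procedure, after conflict resolution an edge that retained its colour is additionally uncoloured with probability $\Eq(e,c)$, and in the list-update step a colour that was not retained around $v$ is nevertheless removed from all of $N(v,c)$ with probability $\Vq(v,c)$. These flips are chosen precisely so that the per-edge retention probability is exactly $\R_i^2$ and the per-vertex ``keep'' probability is exactly $\K_i$, \emph{uniformly} over edges and vertices, regardless of the actual sizes $|N(u,c)|$, $|N(v,c)| \leq N_i$. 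Without them, the retention probability depends on local degrees and you only obtain one-sided bounds; in particular the upper bound $\Exp|N_{G,L_{i+1},\gamma_{i+1}}(v,c)| \leq N_i\,\K_i\,(1-\p\R_i^2)$ fails (edges with small colour-degree are \emph{more} likely to survive, so the expectation can overshoot), and the clean recursion of Setup~\ref{setup} with fixed constants $L_i$, $N_i$, $R_i$ does not close. Second, your claim that a colour in $L_i(e)$ survives at both endpoints ``by independence between the two endpoints'' with probability $\K_i^2$ is false: the events that $c$ is retained near $u$ and near $v$ are not independent (an edge in $N(u,c)$ and an edge in $N(v,c)$ can share a further endpoint). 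The paper handles this with an inclusion--exclusion estimate on $\Pr(\overline{K_u} \cup \overline{K_v})$ and then uses the equalizing flips $\Vq$ to pull the bound back to exactly $\K_i^2$. Relatedly, $|N_{G,L',\gamma'}(v,c)|$ is \emph{not} directly concentrated (it drops to zero whenever $c$ is retained at $v$), so one must work with the proxy $N^*(v,c)$, which you do not mention. None of these issues changes the high-level plan, but as written the expectation step of your proof does not yield the exact $L_i\K_i^2$, $N_i\K_i(1-\p\R_i^2)$ values the recursion requires.
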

We prove Claim~\ref{cla:nibbler} in Section~\ref{sec:nibbler}.
For now, let us finish the proof of Theorem~\ref{thm:main}.

\paragraph{Finishing the proof.}
We iteratively apply Claim~\ref{cla:nibbler} starting with $L_0(e)$, defined in~\eqref{equ:def-Le}, and the empty colouring of $G$, denoted by $\gamma_0$.
This yields a sequences of extensions $(L_i,\gamma_i)$ satisfying the outcome of Claim~\ref{cla:nibbler}.
In particular, each $\gamma_{i}$ is a $1$-edge-colouring, which does not contain any monochromatic cycle in terms of $\cL'$.
After $i_0$ steps, Lemma~\ref{lem:sizes} yields that for every edge $e$ and colour $c \in \Res(e)$, defined in~\eqref{equ:def-Rservee}, there are at most $2R_{i_0} \le 6   \log^{7.5} \Delta$ edges $f$ incident to $e$ for which $c \in \Res(f)$.
On the other hand $|\Res(e)| \geq  (\log^8 \Delta)/2$ by Lemma~\ref{lem:reserved-colours}.
Denote the transposition of $\Res(e)$ to $\cL'$ by $$\Res'(e)=\{c\colon \text{$(c,1)$ or $(c,2) \in \Res(e)$}\}.$$
Thus $|\Res'(e)| \ge (\log^8 \Delta)/4$ and yet for every edge $e$ and colour $c \in \Res'(e)$, there are at most $4R_{i_0} \le 12   \log^{7.5} \Delta$ edges $f$ incident to $e$ for which $c \in \Res'(f)$. We then finish by colouring the remaining uncoloured edges of $G$ from the lists $\Res'(e)$ using Lemma~\ref{lem:finisher}.
Since this is a $1$-edge-colouring, no new monochromatic cycles are generated.
Hence we have proved Theorem~\ref{thm:main}.

\section{Proof of Claim~\ref{cla:nibbler}}\label{sec:nibbler}
In this section we use a random colouring procedure to prove Claim~\ref{cla:nibbler}.
To state the procedure, we introduce a few further definitions.
For convenience, we denote the lists by $L(e)=L_i(e)$, colour neighbours by $N(v,c)=N_{G,L_i,\gamma_i}(v,c)$ and $R(v,c)=R_{\gamma_i}(v,c)$.
The random colouring procedure involves two coin flips that will help us to bound some of the involved terms uniformly.
Recall the definitions of Setup~\ref{setup}.
For an edge $e$, a vertex $v$ and a colour $c$, let
\begin{align}
\Eq(e,c) &=  1- \frac{\R_i^2}{ \left(1-  \frac{\p}{L_i} \right)^{| N(u,c)|+| N(v,c)|}}  \text{ and } \label{equ:Eq} \\
\Vq(v,c) &= 1-\frac{\K_i}{1- \frac{\p}{L_i} |N(v,c)| \R_i^2}. \label{equ:Vq}
\end{align}
Note that by assumption~\ref{itm:colour-neighbours-size-before} of Claim~\ref{cla:nibbler}, we have $|N(u,c)|,  |N(v,c)| \leq N_i$.
Moreover, $L_i > N_i$ by Lemma~\ref{lem:sizes} and as $i <i_0$.
Hence, it follows that $0 \le \Eq(e,c),\Vq(v,c) \leq 1$, which qualifies these terms to play the role of a coin flip probability.

Now we colour (some of) the edges of $G$ using the following random process:
\begin{procedure*}\label{procedure}~
	\begin{enumerate}[(I)]
		\item \label{step:activate} \textbf{Edge activation.} {Activate} each uncoloured edge with probability $\p$.
		
		\item \label{step:assign} \textbf{Assign colours.} Assign to each activated edge $e$ a colour chosen uniformly at random from $L(e)$.
		
		\item \label{step:unassign-and-coinflip} \textbf{Resolve conflicts.}
		Uncolour every edge $e$, which is assigned the same colour as an incident edge.
		If $e$ is assigned colour $c$ and no neighbour was assigned $e$, then uncolour $e$ with probability $\Eq(e,c)$. 
		\item \label{step:update-lists-and-coinflip}  \textbf{Update lists.}
		For every vertex $v$ and colour $c$, if $c$ is retained by some edge in $N(v,c)$, then remove $c$ from the lists $L(f)$ of all other edges $f \in N(v,c)$.
		If $c$ is not retained by an edge in $N(v,c)$, then with probability $\Vq(v,c)$ remove $c$ from the lists $L(f)$ of all edges $f \in N(v,c)$.
		
	\end{enumerate}
\end{procedure*}
Let us denote the partial $1$-edge-colouring and lists of colours after step~\ref{step:update-lists-and-coinflip} of the procedure by  $\gamma'$ and $L'(e)$.
We also abbreviate $N'(v,c)=N_{G,L',\gamma'}(v,c)$ and $R'(v,c)=R_{\gamma'}(v,c)$.
A concentration analysis shows that the sizes of the random variables $L'(e)$, $N'(v,c)$ and $R'(v,c)$ are (individually) bounded with high probability.
The proof of Claim~\ref{cla:MR-edge-bounds} is fairly standard by now (see for instance the monograph of Molloy and Reed~\cite{MR13}).
For sake of completeness, the details of the argument are spelled out in Appendix~\ref{sec:MR-edge-bounds}.

\begin{claim}[{Molloy and Reed~\cite[Claim 14.9--14]{MR13}}]\label{cla:MR-edge-bounds}
	Fix an edge $e$, a vertex $v$ and a colour $c$.
	It holds with probability at least $1-\Delta^{-10\log \Delta}$ that
	\begin{align*}
		|L'(e)| &\geq L_i \cdot \K_i^2 - \tfrac{1}{2} \sqrt{L_i} \log^2 \Delta,
		\\|N'(v,c)| &\leq N_i \cdot  \K_i  \cdot \left(1- \p \R_i^2   \right) + \tfrac{1}{2}\sqrt{N_i} \log^2 \Delta \text{, and}
		\\ |R'(v,c)| &\leq R_i \cdot  \K_i + \tfrac{1}{2} \sqrt{R_i} \log^2 \Delta.
	\end{align*}
\end{claim}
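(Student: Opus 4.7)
The plan is to realise each of $|L'(e)|$, $|N'(v,c)|$, and $|R'(v,c)|$ as a function of the mutually independent random trials driving the procedure --- the activation indicators in step~\ref{step:activate}, the colour assignments in step~\ref{step:assign}, and the $\Eq$ and $\Vq$ coin flips in steps~\ref{step:unassign-and-coinflip} and~\ref{step:update-lists-and-coinflip} --- and prove each bound in two stages: compute the expectation, then apply Talagrand's inequality to control the deviation at the scale $\tfrac12 \sqrt{\,\cdot\,}\log^2\Delta$.

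For the expectations, the coin-flip probabilities~\eqref{equ:Eq} and~\eqref{equ:Vq} were designed precisely to make the bookkeeping collapse cleanly. A direct calculation shows that a fixed edge $f$ retains a fixed colour $c \in L(f)$ with probability $\p\R_i^2/L_i$, with $\Eq$ cancelling the dependence on the actual sizes of $N(u,c)$ and $N(v,c)$. Because the retention events for distinct edges in $N(v,c)$ are mutually exclusive, $\Pr[\text{some edge of } N(v,c) \text{ retains } c] = |N(v,c)|\,\p\R_i^2/L_i$, and $\Vq(v,c)$ then smooths this so that $c$ survives the $v$-side of the update with probability exactly $\K_i$. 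Multiplying over the two endpoints of $e$ yields $\Exp[|L'(e)|] = L_i\K_i^2$; combining the $\K_i$ survival on one side with the probability $1-\p\R_i^2$ that an edge remains uncoloured gives $\Exp[|N'(v,c)|] \leq N_i\K_i(1-\p\R_i^2)$; and an edge of $R(v,c)$ survives precisely when it is not permanently coloured, so $\Exp[|R'(v,c)|] = |R(v,c)|(1-\p\R_i^2) \leq R_i\K_i$, where the last inequality uses $1-\p\R_i^2 \leq \K_i$, a consequence of $N_i \leq L_i$ (Lemma~\ref{lem:sizes}).

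For the concentration step, I would apply Talagrand's inequality with the atomic coin flips above as the independent trials. The Lipschitz condition holds because toggling any single activation, assignment, or coin flip changes the retention status of at most two specific edges and hence changes each of $|L'(e)|$, $|N'(v,c)|$, and $|R'(v,c)|$ by an absolute constant. For certifiability, each unit counted (a surviving colour, a surviving uncoloured colour-neighbour, or a surviving uncoloured reserved edge) can be certified by exposing the activations, assignments, and coin flips on a small neighbourhood around the relevant edge. Talagrand then delivers a deviation of at most $\tfrac12\sqrt{\,\cdot\,}\log^2\Delta$ from the median with failure probability $\exp(-\Omega(\log^4\Delta))$, which is below $\Delta^{-10\log\Delta}$, and a standard mean-versus-median comparison transfers this to the expectation.

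The main obstacle is bookkeeping the cascading dependencies introduced by $\Eq$ and $\Vq$: flipping one assignment may alter whether an incident edge retains its colour, which can in turn flip a $\Vq$ outcome and propagate through many list entries simultaneously. The standard remedy --- the one used in the Molloy--Reed textbook exposition we follow --- is to treat the coin flips as the atomic trials for Talagrand and to design each certificate so that it exposes only the atomic data needed to recompute an outcome, rather than the downstream list updates. As this verification is carried out in detail in~\cite{MR13}, we content ourselves with quoting the bound.
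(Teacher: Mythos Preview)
Your overall strategy matches the paper's: compute expectations using the $\Eq$/$\Vq$ equalisers, then apply Talagrand, ultimately deferring to~\cite{MR13}. The expectation heuristics are morally right, though ``multiplying over the two endpoints'' for $|L'(e)|$ glosses over genuine dependence between the $u$-side and $v$-side retention events; the paper handles this by an inclusion--exclusion argument and obtains only the inequality $\Exp[|L'(e)|]\ge L_i\K_i^2$, not the equality you state.

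The concentration sketch, however, contains a genuine error. You claim that toggling any single atomic trial changes each of the three quantities by $O(1)$. For $|N'(v,c)|$ this is false: if the single $\Vq(v,c)$ coin fires (or if a single edge of $N(v,c)$ retains $c$), then $c$ is removed from \emph{every} list in $N(v,c)$ and $|N'(v,c)|$ drops to zero. So $|N'(v,c)|$ is not $O(1)$-Lipschitz and Talagrand cannot be applied to it directly; the paper explicitly notes that $N'(v,c)$ is \emph{not} concentrated and passes to an overcount $N^*(v,c)\supseteq N'(v,c)$ which ignores the $v$-side list update and records, for each $uv\in N(v,c)$, only whether $uv$ stayed uncoloured and whether some edge of $N(u,c)$ retained $c$. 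A milder version of the problem also affects $|L'(e)|$: certifying that a colour survives requires witnessing that \emph{no} neighbour retained it, costing order $N_i$ trials per colour, so the certificate parameter $r$ is not bounded; the paper instead decomposes $L_i-|L'(e)|$ into auxiliary counts $X_{j,k}$ each of which has short certificates. Your closing paragraph senses that cascades are the issue but misidentifies the cure: the remedy in~\cite{MR13} is not a repackaging of the atomic trials but a replacement of the target random variable by decomposable pieces that are individually Lipschitz and certifiable.
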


Note that we could finish with an application of the Lov\'asz Local Lemma at this point and obtain lists and a $1$-edge-colouring that would satisfy~\ref{itm:list-size-after}--\ref{itm:reserved-size-after} of Claim~\ref{cla:nibbler}.
If our goal was to obtain a (partial) $2$-edge-colouring of $G$ from $\cL'$, then this would be the end of the proof.
However, we also want to suppress dangerous paths, as formalized in~\ref{itm:cycle-conflict-after}.

Consider an edge $e=uv$ and colour $c$ such that there is a dangerous path $P$ for $(u,v,c)$ under $\gamma'$.
By condition~\ref{itm:cycle-conflict-before} of Claim~\ref{cla:nibbler}, $P$ must have an edge that was uncoloured under $\gamma$, which without loss of generality we can assume to be $e$.
Hence  either  there is some $1 \leq k <\ell$, where $\ell$  was defined in Setup~\ref{setup}, such that $P \in P(u,v,c;k)$ or $P$ starts with a segment of $P(u,c;\ell) \cup P(v,c;\ell)$.
We can therefore focus on dangerous paths that correspond to 
\begin{align}\label{equ:dangerous-paths}
	P(u,v,c) := \bigcup_{k=1}^{\ell-1} P(u,v,c;k) \cup P(u,c;\ell) \cup P(v,c;\ell).
\end{align}
The following modification, which we carry out after step~\ref{step:update-lists-and-coinflip}, deals with these paths.
\begin{enumerate}[(I)] \addtocounter{enumi}{4}
	\item \label{step:cycle-tweak}  \textbf{Prevent dangerous paths.}  For every edge $uv$ uncoloured in $\gamma$ and colour $c \in L(uv)$, if  there is a path in $P(u,v,c)$ that is dangerous for $(u,v,c)$ after step~\ref{step:assign}, then remove $c$ from $L(uv)$. 
	If $e$ was assigned $c$ also uncolour $uv$.
	(If $c$ was already removed in step~\ref{step:unassign-and-coinflip} or~\ref{step:update-lists-and-coinflip}, do nothing.)
\end{enumerate}
We remark that step~\ref{step:cycle-tweak} is wasteful since colours are removed from the lists on basis of assignments and not retainments as in step~\ref{step:update-lists-and-coinflip}.
While this increases the error term slightly, it also simplifies the analysis in the proof of Claim~\ref{cla:cycle-tweak} below.

We denote the colouring after step~\ref{step:cycle-tweak} by $\gamma''$ and the lists of colours by $L''$.
Clearly $(\gamma'',L'')$ is an extension of $(\gamma,L)$.
Moreover, $\gamma''$ is still a $1$-edge-colouring and also satisfies~\ref{itm:cycle-conflict-after} of Claim~\ref{cla:nibbler}.
It remains to show that $\gamma''$ satisfies~\ref{itm:list-size-after}--\ref{itm:reserved-size-after}.
To this end, we need to now bound the additional error terms related to step~\ref{step:cycle-tweak}.
The following random variables help us to track the deviations.

\begin{definition}
	Consider an (in $\gamma$) uncoloured edge $uv$ and a colour $c$.
	\begin{itemize}
		\item Let $X(uv)$ be the set of colours $c\in L(uv)$ for which there is a path in $P(u,v,c)$ that is dangerous for $(u,v,c)$ after step~\ref{step:assign}.
		\item Let $Y(v,c)$ be the set of edges $vw \in N(v,c)$ for which there is a path in $P(v,w,c)$ that is dangerous for $(v,w,c)$ after step~\ref{step:assign}.
		\item Let $Z(v,c)$ be the set of edges $vw$ with  $c \in \Res(w)$, for which there is a path in $P(v,w,c)$ that is dangerous for $(v,w,c)$ after step~\ref{step:assign}.
	\end{itemize}
\end{definition}
Note that
\begin{align*}
|L''(e)| &\geq |L'(e)| - |X(e)|,
\\	|N''(v,c)| &\leq |N'(v,c)| + |Y(v,c)|, \text{ and }
\\|	R''(v,c) |&\leq |R'(v,c)| + |Z(v,c)|.
\end{align*}
The next claim shows that the sizes of $X(e)$, $Y(v,c)$ and $Z(v,c)$ are bounded with high probability.
\begin{claim}\label{cla:cycle-tweak}
	Fix an edge $e$, a vertex $v$ and a colour $c$.
	It holds with probability at least $1-\Delta^{-10\log \Delta}$ that 
	\begin{align*}
		|X(e)| &\leq  \tfrac{1}{4} \sqrt{L_i} \log^2 \Delta,
		\\ |Y(v,c)| &\leq \tfrac{1}{4} \sqrt{N_i} \log^2 \Delta \text{, and}
		\\ |Z(v,c)| &\leq \tfrac{1}{4} \sqrt{R_i} \log^2 \Delta.
	\end{align*}
\end{claim}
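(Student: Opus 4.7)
The plan is to bound the expectations of $|X(e)|$, $|Y(v,c)|$ and $|Z(v,c)|$ via Observations~\ref{obs:number-suspicious} and~\ref{obs:probability-dangerous}, and then apply a concentration inequality (Talagrand's for $|X(e)|$ and a polynomial-type bound for the others) to the independent trials given by the edge activations and colour assignments of steps~\ref{step:activate} and~\ref{step:assign}. For any uncoloured edge $e=uv$ and colour $c$, a union bound combining the two observations yields
\begin{align*}
\Pr\bigl[\exists\,\text{dangerous } P\in P(u,v,c)\bigr] \;\le\; \sum_{k=1}^{\ell-1} N_i^{\,k-1}\Bigl(\tfrac{\p}{L_i}\Bigr)^{\!k} \;+\; 2\,N_i^{\,\ell}\Bigl(\tfrac{\p}{L_i}\Bigr)^{\!\ell} \;=\; O\!\Bigl(\tfrac{1}{L_i}\Bigr),
\end{align*}
using $N_i\le L_i$ from Lemma~\ref{lem:sizes}, $\p=1/4$, and $\ell=2\log\Delta$ so that the tail term is $O(\p^{\ell})=O(\Delta^{-4})$. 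Summing over the $L_i$ colours of $L(e)$ gives $\Exp[|X(e)|]=O(1)$; summing analogously over the at most $N_i$ edges $vw\in N(v,c)$ gives $\Exp[|Y(v,c)|]=O(1)$; and summing over the at most $R_i$ edges $vw$ with $c\in\Res(w)$ gives $\Exp[|Z(v,c)|]=O(R_i/L_i)=O(\log\Delta)$ (by Lemma~\ref{lem:sizes}).

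For the concentration of $|X(e)|$ we apply Talagrand's inequality. The essential structural observation is that the Lipschitz constant is at most~$4$: changing the outcome of a single edge $f$ can flip the indicator $\mathbf{1}[c\in X(e)]$ only if $f$ lies on some dangerous path for $(u,v,c)$ in the old or the new state, which forces $f$'s assigned colour $\alpha$ to equal $c$ (if $f$ is at an even position along the path) or the twin of $c$ (if odd). Thus at most two colours are affected per outcome of $f$, hence four across the old and new outcomes. Certification requires only $\ell$ trials per counted colour, since each witnessed colour is certified by one dangerous path of at most $\ell$ uncoloured edges. Plugging Lipschitz $4$, certificate size $\ell$ and expectation $O(1)$ into Talagrand's inequality yields a tail bound of $\exp\!\bigl(-\Omega(L_i\log^3\Delta)\bigr)$ for the deviation $\tfrac12\sqrt{L_i}\log^2\Delta$, which is far smaller than $\Delta^{-10\log\Delta}=\exp(-O(\log^2\Delta))$ since $L_i\ge\log^7\Delta$.

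The main obstacle lies in the Lipschitz analysis for $|Y(v,c)|$ and $|Z(v,c)|$, where changing a single edge $f$'s outcome may in principle affect many distinct endpoints $w$: paths through $f$ can branch out into as many as $N_i^{\ell-1}$ different terminal vertices, so the naive Lipschitz bound is useless. To circumvent this, bound $|Y(v,c)|\le Y^*(v,c)$, the total number of dangerous paths from $v$ for colour $c$, and regard $Y^*$ as a multilinear polynomial of degree at most $\ell$ in the independent categorical trials indexed by edges (one trial being ``activated and assigned a specific colour''), and then apply Kim--Vu polynomial concentration. The mixed partial expectations $E_r(Y^*)$ are uniformly $O(1)$ by the same geometric-series argument used for the expectations (conditioning on $r$ fixed edges being correctly activated and coloured leaves a suspicious-path extension problem whose expected count is bounded by $\sum_{k\ge 0}(\p N_i/L_i)^k=O(1)$). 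Kim--Vu then yields a tail $\exp(-\Omega(\log^2\Delta))\ll\Delta^{-10\log\Delta}$ at the required deviation $\tfrac12\sqrt{N_i}\log^2\Delta$, and an analogous argument applied to the polynomial counting dangerous paths ending at vertices $w$ with $c\in\Res(w)$ gives the corresponding bound for $|Z(v,c)|$.
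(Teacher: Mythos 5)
Your expectation bounds for $|X(e)|$, $|Y(v,c)|$, $|Z(v,c)|$ match the paper exactly (Observations~\ref{obs:number-suspicious} and~\ref{obs:probability-dangerous}, geometric sum, $N_i\le L_i$, $R_i/L_i\le\log\Delta$), and your Talagrand analysis for $|X(e)|$ is essentially the paper's (the paper uses Lipschitz constant $2$ rather than $4$ but this is immaterial). Where you part ways with the paper is the concentration of $|Y(v,c)|$ and $|Z(v,c)|$: the paper simply asserts that these ``follow along the same lines'' by Talagrand, whereas you flag the branching issue for the Lipschitz constant and switch to a Kim--Vu polynomial-concentration argument. Identifying the branching concern is a reasonable instinct, but the Kim--Vu route does not work quantitatively, and this is a genuine gap.

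The trouble is the degree $\ell=2\log\Delta$. The Kim--Vu inequality states (roughly) that for a degree-$k$ polynomial $Y$ with mixed-derivative expectations $\mathcal{E}_0,\mathcal{E}'$, one has $\Pr\bigl[|Y-\Exp Y|>a_k(\mathcal{E}_0\mathcal{E}')^{1/2}\lambda^k\bigr]\le d_k\,e^{-\lambda}\,n^{k-1}$, with $a_k$ of order $8^k\sqrt{k!}$. To drive the failure probability below $\Delta^{-10\log\Delta}=e^{-10\log^2\Delta}$ one must take $\lambda=\Omega(\log^2\Delta)$ (indeed $\lambda$ must also absorb the $n^{k-1}\approx\Delta^{2\ell}$ term, so $\lambda=\Theta(\log^2\Delta)$). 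But then the guaranteed deviation is at least $a_\ell\,\lambda^\ell$, and already $a_\ell\ge 8^{2\log\Delta}=\Delta^{\Theta(1)}$ while $\lambda^\ell=(\Theta(\log^2\Delta))^{2\log\Delta}=\Delta^{\Theta(\log\log\Delta)}$. Both factors are superpolynomial in $\sqrt{N_i}\log^2\Delta\le\sqrt{\Delta}\log^2\Delta$, so Kim--Vu gives a tail bound at a deviation vastly larger than the target $\tfrac12\sqrt{N_i}\log^2\Delta$; it yields nothing of use here. The same problem afflicts the $|Z(v,c)|$ bound. The error in your write-up is the claim that the mixed-derivative bound $O(1)$ together with a degree-$\ell$ Kim--Vu gives a useful deviation at exponent $\exp(-\Omega(\log^2\Delta))$: the hidden constants $a_\ell,\lambda^\ell$ in Kim--Vu depend exponentially on the degree, and with $\ell=\Theta(\log\Delta)$ they swamp the required $\sqrt{N_i}\,\mathrm{polylog}\,\Delta$ window. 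The paper's own route, while terse, sticks with Talagrand for $Y$ and $Z$ by viewing a single trial's influence as adding or removing at most a bounded number of endpoints (the certificate argument is the same one used for $X$, exploiting the $\ell$-edge truncation of suspicious paths), and you would need to recover that bounded-influence reasoning rather than reach for polynomial concentration.
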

We give a proof of Claim~\ref{cla:cycle-tweak} in Section~\ref{sec:cycle-tweak}.
Our final claim shows that the sizes of the above defined random variables can be bounded simultaneously.

\begin{claim}\label{cla:simultaneously}
	With positive probability, the conclusions of Claim~\ref{cla:MR-edge-bounds} and~\ref{cla:cycle-tweak} hold for all edges $e$, vertices $v$ and a colours $c$.
\end{claim}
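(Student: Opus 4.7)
The plan is to apply the symmetric Lov\'asz Local Lemma to the collection of bad events consisting, for each relevant edge $e$, vertex $v$ and colour $c$, of the failures of the six inequalities from Claims~\ref{cla:MR-edge-bounds} and~\ref{cla:cycle-tweak}. Each such bad event has probability at most $\Delta^{-10\log\Delta}$ by those claims, so the only task is to bound the mutual dependency degree $d$ and check the LLL condition.

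The underlying randomness in the random colouring procedure (together with step~\ref{step:cycle-tweak}) consists of the activation indicator and colour assignment of each uncoloured edge, the equaliser coin flip $\Eq(e,c)$ for each edge--colour pair, and the vertex coin flip $\Vq(v,c)$ for each vertex--colour pair. For the bounds in Claim~\ref{cla:MR-edge-bounds}, standard inspection of steps~\ref{step:activate}--\ref{step:update-lists-and-coinflip} shows that the scope is contained in the variables attached to edges within graph-distance $2$ of the distinguished edge or vertex, hence of size $O(\Delta^2)$. For the bounds in Claim~\ref{cla:cycle-tweak}, the scope additionally includes the variables on the edges of suspicious paths contributing to $P(u,v,c)$ and to $P(v,c;\ell)$, whose uncoloured length is at most $\ell = 2\log\Delta$ by construction. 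By Observation~\ref{obs:number-suspicious}, the number of such paths issuing from a fixed vertex with a fixed colour is at most $N_i^\ell \leq \Delta^{2\log\Delta}$; summing over the at most $L_i \leq \Delta+1$ colours and the $\ell$ uncoloured edges per path, the total scope has size $\Delta^{O(\log\Delta)}$.

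A symmetric argument then bounds the mutual dependency degree: each random variable attached to an edge $f$ lies in the scope only of those bad events whose distinguished edge or vertex is within path-distance $\ell+O(1)$ of $f$, so it influences at most $\Delta^{O(\log\Delta)}$ other bad events. Hence $d \leq \Delta^{c\log\Delta}$ for some absolute constant $c$, easily verified to satisfy $c<10$. With $p \leq \Delta^{-10\log\Delta}$, the symmetric Local Lemma condition $4pd \leq 1$ reduces to $\Delta^{(c-10)\log\Delta + o(1)} \ll 1$, which holds for sufficiently large $\Delta$, giving positive probability that every bad event is avoided and thereby proving the claim. The main obstacle, and precisely the reason the cut-off $\ell = 2\log\Delta$ was imposed in Section~\ref{sec:nibble}, is this very dependency count: without the cut-off the scope of the Claim~\ref{cla:cycle-tweak} bad events would blow up super-polynomially in $\Delta$ and outpace the concentration tail bound $\Delta^{-10\log\Delta}$, at which point the LLL could no longer be applied.
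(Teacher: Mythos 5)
Your proposal is correct and takes essentially the same route as the paper: apply the Lov\'asz Local Lemma, bounding the dependency degree by counting suspicious paths with at most $\ell = 2\log\Delta$ uncoloured edges (via Observation~\ref{obs:number-suspicious}), so that $d \leq \Delta^{O(\log\Delta)}$ with exponent below $10$ and the LLL condition holds against the failure probability $\Delta^{-10\log\Delta}$. The paper carries out the explicit count $d \leq 6(2L_iN_i^{\ell+1}R_i)^2 \leq \Delta^{8\log\Delta}$ where you only assert ``easily verified $c<10$'', and one small imprecision in your write-up is that ``path-distance $\ell+O(1)$'' should really be uncoloured length along suspicious paths (which can contain long coloured segments), but this does not change the dependency count.
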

The proof of  Claim~\ref{cla:simultaneously} can be found in Section~\ref{sec:uniform-bounds}.
For now, we continue with the proof of Claim~\ref{cla:nibbler}.
Denote by  $N''(v,c)$, $R''(v,c)$ the objects obtained from $N'(v,c)$, $R'(v,c)$ after step~\ref{step:cycle-tweak}.
By Claim~\ref{cla:simultaneously}, we can choose the colour assignments such that
\begin{align*}
|L''(e)| &\geq \left( L_i \cdot \K_i^2 - \tfrac{1}{2} \sqrt{L_i} \log^2 \Delta\right) - \left(\tfrac{1}{2} \sqrt{L_i} \log^2 \Delta\right) = L_{i+1},
\\	|N''(v,c)| &\leq   \left(N_i \cdot  \K_i  \cdot \left(1- \p \R_i^2   \right) + \tfrac{1}{2}\sqrt{N_i} \log^2 \Delta \right)+ \left(\tfrac{1}{2} \sqrt{N_i} \log^2 \Delta\right) = N_{i+1}, \text{ and }
\\|R''(v,c)| & \leq \left(R_i \cdot  \K_i + \tfrac{1}{2} \sqrt{R_i} \log^2 \Delta \right) + \left(\tfrac{1}{2} \sqrt{R_i} \log^2 \Delta\right) = R_{i+1}.
\end{align*}
Take $L_{i+1}(e)=L''(e)$ and $\gamma_{i+1} = \gamma''$.
Note that $\gamma_{i+1}$ is twin-acyclic and the lists $L_{i+1}(e)$ are $\gamma_{i+1}$-compatible by Claims~\ref{cla:MR-edge-bounds} and~\ref{cla:cycle-tweak} due to steps~\ref{step:update-lists-and-coinflip} and~\ref{step:cycle-tweak} of the procedure.
Lastly, to ensure that $|L_{i+1}(e)|=L_{i+1}$, we delete $|L_{i+1}(e)|-L_{i+1}$ colours from every list $L_{i+1}(e)$ with $|L''(e)| > L_{i+1}$.
This finishes the proof of Claim~\ref{cla:nibbler}.
 
\section{Cycle modification analysis}\label{sec:cycle-tweak}
In this section, we prove Claim~\ref{cla:cycle-tweak}.
Our approach relies on a straightforward concentration analysis.
First we show that the expectation of $|X(e)|$ and $|Y(v,c)|$ are bounded by $4\p$ and the expectation of $|Z(v,c)|$ is bounded by $4\p \log \Delta$.
Then we show that with high probability, neither of those random variables deviates by more than $\tfrac{1}{2}\sqrt{L_i} \log^2 \Delta$ ($\sqrt{N_i}$, $\sqrt{R_i}$ respectively) from their expectation.

\subsection{Expectation.}
Let us start with the expectation of $X(e)$ for an edge $e=uv$.
As a reminder, this is the set of colours $c\in L(uv)$ for which there is a path in $P(u,v,c)$ that is dangerous for $(u,v,c)$ after step~\ref{step:assign}.
By~\eqref{equ:dangerous-paths}, such a path $P$ is in  $P(u,c;\ell)$, $P(v,c;\ell)$ or $P(u,v,c;k)$ for some $1 \leq k \leq \ell-1$.
Since $\gamma$ is a $1$-edge-colouring and by~\ref{itm:colour-neighbours-size-before} of Claim~\ref{cla:nibbler}, we can use Observation~\ref{obs:number-suspicious} to  bound $|P(u,c;\ell)|$, $|P(v,c;\ell)| \leq N_i^{\ell}$ and $|P(u,v,c;k)| \leq N_i^{k-1}$   for every $1 \leq k \leq \ell-1$.
Moreover, the probability that $P \in P(u,v,c;k)$ is dangerous for $(u,v,c)$ after step~\ref{step:assign} is $(\p/ L_i)^{k}$.
Similarly, the probability that $P \in P(u,c;\ell)$ is dangerous for $(u,v,c)$ after step~\ref{step:assign} is $(\p/ L_i)^{\ell}$ by Observation~\ref{obs:probability-dangerous}.
Also note that by choice of $p=1/4$ and $\ell=2\log \Delta$ in Setup~\ref{setup}, we have
\begin{align*}
	 \p^{\l} = \p\cdot \frac{1}{4^{2 \log \Delta -1}}  \leq \frac{\p}{2\Delta} \leq  \frac{\p}{L_i}.
\end{align*}
Together,  we can bound the probability that a path in $P(u,v,c)$ is dangerous for $(u,v,c)$ after step~\ref{step:assign} by
\begin{align*}
 2 \left(\frac{\p N_i}{L_i}  \right)^{\l} +    \frac{\p}{L_i} \sum_{k=0}^{{\l}-1} \left(\frac{\p N_i}{L_i}\right)^k 
&\leq 2\p^{{\l}} +  \frac{\p}{L_i} \cdot \frac{ 1}{1-\frac{\p N_i}{L_i}}  
  \leq \frac{4\p}{L_i},
\end{align*}
where we used in the penultimate inequality that $L_i\geq N_i$, which holds by Lemma~\ref{lem:sizes}.
Linearity of expectation then gives  $\Exp(|X(e)|) \leq 4\p$.
The expectations of $Y(v,c)$ and $Z(v,c)$ follow analogously, since $|N(v,c)| \leq N_i$, $|R(v,c)| \leq R_i$ by assumption and $N_i \leq L_i$ and $R_i / L_i \leq \log \Delta$ by Lemma~\ref{lem:sizes}.

\subsection{Concentration.}
We use Talagrand's inequality to show that the random variables $X,Y,Z$ are highly concentrated around their expectation. 
\begin{theorem}[Talagrand's inequality~\cite{Tal95}]\label{thm:talagrand}
	Let $X$ be a non-negative random variable determined by the independent trials $T_1,\ldots,T_n$. Suppose that for every set of possible outcomes of the trials, we have:
	\begin{enumerate}[(i)]
		\item changing the outcome of any one trial can affect $X$ by at most $c$ and
		\item for each $s > 0$, if $X \geq s$ then there is a set of at most $rs$ trials whose outcomes certify $X \geq s$.
	\end{enumerate}
	Then for any $t > 96c \sqrt{r \Exp(X)} + 128rc^2$ we have
	\begin{align*}
	\Pr(|X-\Exp(X)| > t) \leq 4 \exp\left({-\frac{t^2}{8c^2r(4\Exp(X)+t)}}\right).
	\end{align*}
\end{theorem}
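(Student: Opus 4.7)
The plan is to derive this concentration inequality from its underlying driver, Talagrand's \emph{convex distance inequality}. On the product probability space $\Omega = \prod_{i=1}^{n} \Omega_i$, define the convex distance from a point $x$ to a measurable set $A \subseteq \Omega$ by
\[
d_T(x, A) \;=\; \sup_{\alpha \in \REALS^n_{\geq 0},\ |\alpha| \leq 1} \;\inf_{y \in A}\; \sum_{i\colon x_i \neq y_i} \alpha_i.
\]
Talagrand's theorem in this setting asserts that $\Pr(A) \cdot \Exp\bigl[\exp(d_T(X,A)^2/4)\bigr] \leq 1$, and is proved by induction on $n$ via an optimisation argument in the inductive step. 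Via Markov's inequality it yields, for every $u > 0$, the master estimate $\Pr(d_T(X, A) \geq u) \leq e^{-u^2/4}/\Pr(A)$.

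Next, I would translate hypotheses (i) and (ii) into a lower bound on $d_T$. Fix $s \geq 0$ and $x$ with $X(x) \geq s + t$, and let $I_x$ be the certificate of size at most $r(s+t)$ guaranteed by (ii). For any $y$ with $X(y) \leq s$, flipping $y$ on the coordinates of $I_x \cap \{i \colon y_i \neq x_i\}$ to agree with $x$ produces $y'$ that matches $x$ throughout $I_x$, so $X(y') \geq s+t$; hypothesis (i) then forces $|I_x \cap \{i \colon y_i \neq x_i\}| \geq t/c$. Taking $\alpha$ to be $|I_x|^{-1/2}$ on $I_x$ and $0$ elsewhere certifies $d_T(x, \{X \leq s\}) \geq t / \bigl(c\sqrt{r(s+t)}\bigr)$. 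Applying the master estimate with $s = m$, the median of $X$, so that $\Pr(X \leq m) \geq 1/2$, yields
\[
\Pr(X \geq m + t) \;\leq\; 2\exp\!\left(-\frac{t^2}{4 c^2 r(m+t)}\right),
\]
and the symmetric lower tail follows from the same argument applied at a reference point with $X \geq m$ (or by the dual measure-theoretic characterisation of $d_T$).

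It remains to pass from the median to the expectation. Integrating the two-sided tail bound shows that $|\Exp(X) - m| \leq 48 c\sqrt{r \Exp(X)} + 64 r c^2$, and the hypothesis $t > 96 c \sqrt{r \Exp(X)} + 128 r c^2$ is precisely large enough to absorb this shift with a factor of two to spare. Using $m + t \leq 2\Exp(X) + t \leq 4\Exp(X) + t$ in the denominator of the exponent and combining the upper and lower tail estimates into a single symmetric bound produces the leading factor $4$ and the constant $8$ in the denominator of the final expression. The main obstacle is the inductive proof of the convex distance inequality itself, which is classical but genuinely intricate; once it is in hand the remainder of the argument is routine, though the specific numerical constants in the hypothesis on $t$ require careful bookkeeping.
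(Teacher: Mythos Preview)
The paper does not prove this statement at all: Theorem~\ref{thm:talagrand} is quoted verbatim as a black box from~\cite{Tal95} (in the form popularised in Molloy and Reed's monograph~\cite{MR13}) and is used only as a tool in Section~\ref{sec:cycle-tweak} and the appendix. There is therefore no ``paper's own proof'' to compare your proposal against.

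That said, your sketch is the standard derivation and is essentially correct. You correctly reduce to Talagrand's convex distance inequality, and your translation of hypotheses (i) and (ii) into the lower bound $d_T(x,\{X\le s\})\ge t/(c\sqrt{r(s+t)})$ via the certificate $I_x$ and the weight vector $\alpha = |I_x|^{-1/2}\mathbf{1}_{I_x}$ is the right idea. One point worth tightening: for the lower tail you should explicitly take $A=\{X\le m-t\}$ and observe that any $x$ with $X(x)\ge m$ carries a certificate $I_x$ of size at most $rm$, so every $y\in A$ must differ from $x$ on at least $t/c$ coordinates of $I_x$; then Talagrand's inequality in the form $\Pr(A)\,\Pr(d_T(\cdot,A)\ge u)\le e^{-u^2/4}$ combined with $\Pr(X\ge m)\ge 1/2$ yields $\Pr(A)\le 2\exp\bigl(-t^2/(4c^2 r m)\bigr)$. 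Your phrase ``the symmetric lower tail follows from the same argument applied at a reference point'' glosses over the fact that the certificate now lives at $x$ rather than at $y$, which is precisely what makes this direction work without invoking any duality. The median-to-mean transfer and the resulting constants are bookkeeping, as you say, and match the version in~\cite{MR13}.
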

 
Fix an edge $e=uv$ and let us abbreviate $X = |X(e)|$.
Our intention is to apply Talagrand's inequality to $X$ with $c=2$, $r=\l$ and $t=\tfrac{1}{4}\sqrt{L_i} \log^2\Delta$.
Note that changing any the outcome of the edge activation in step~\ref{step:activate} or colour assignment in step~\ref{step:assign} can affect the size of $X$ by at most $2$.
This is because at most one colour might be added to or removed from $X(e)$ this way.
Moreover, if $X \geq s$, then for every colour $c \in X(e)$, there must be a path in $P \in   P(u,v,c)$ together with at most $ \l$ edges of $P$ that have been activated and assigned either $c$ or its twin colour.
(Here it becomes apparent why we chose to restrict the paths in $P(u,v,c)$ to $\ell$ edges.)
Recall that by assumption as $i < i_0$, we have that $L_i \geq \log^7 \Delta$.
Moreover, $\Exp(X) \leq 4\p$ by the above.
Since $\Delta$ is large enough, we have
$$t=\tfrac{1}{2}\sqrt{L_i} \log^2\Delta \geq \log^5 \Delta>192 \sqrt{4\p\l} + 512\l \geq 96c \sqrt{r \Exp(X)} + 128rc^2.$$
Hence we obtain from Talagrand's inequality (Theorem~\ref{thm:talagrand}) that 
\begin{align*}
	\Pr(|X-\Exp(X)| > t) \leq 4 \exp\left({-\frac{t^2}{8c^2r(4\Exp(X)+t)}}\right) \leq   \exp\left(-\frac{\sqrt{L_i}\log \Delta}{2^9}\right) \leq \Delta^{-10\log \Delta},
\end{align*}
where we used $4\Exp(X)\leq t$ and $r=\ell = 2 \log \Delta$ in the second inequality.
The concentration of $Y(v,c)$ and $R(v,c)$ follows along the same lines.

\section{Uniform bounds}\label{sec:uniform-bounds}
In this section, we prove Claim~\ref{cla:simultaneously}.
We require the Lov\'asz Local Lemma~\cite{EL75}.

\begin{lemma}[{Lov\'asz Local Lemma}]\label{lem:LLL}
		Let $A_1,\dots,A_n$ be events in an arbitrary probability space.
		Suppose that each event $A_i$ is mutually independent of all but at most $d$ other events, and that $\Pr(A_i) \leq p$ for all $1 \leq i \leq n$.
		If $e p (d+1) \leq 1$, then $\Pr\left( \bigwedge_{i=1}^n \overline{A_i} \right) >0$.
\end{lemma}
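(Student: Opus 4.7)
The plan is to prove the Lov\'asz Local Lemma by establishing a stronger inductive statement and then deducing the conclusion via the chain rule. Set $x := 1/(d+1)$, and for each $i$ let $D_i \subseteq \{1,\dots,n\} \setminus \{i\}$ be the (at most $d$-element) set of indices on which $A_i$ depends, so that $A_i$ is mutually independent of $\{A_j : j \notin D_i \cup \{i\}\}$. The key claim I will prove is: for every index $i$ and every subset $S \subseteq \{1,\dots,n\} \setminus \{i\}$,
$$\Pr\Bigl(A_i \,\Big|\, \bigwedge_{j \in S} \overline{A_j}\Bigr) \leq x. \qquad (*)$$
Once $(*)$ is established, a routine application of the chain rule gives
$$\Pr\Bigl( \bigwedge_{i=1}^n \overline{A_i}\Bigr) = \prod_{i=1}^n \Pr\Bigl(\overline{A_i} \,\Big|\, \bigwedge_{j<i} \overline{A_j}\Bigr) \geq (1-x)^n > 0,$$
which proves the theorem.

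I would prove $(*)$ by induction on $|S|$. The base case $S = \emptyset$ follows from $\Pr(A_i) \leq p \leq 1/(e(d+1)) \leq x$, using the hypothesis $ep(d+1) \leq 1$. For the inductive step, split $S = S_1 \sqcup S_2$ where $S_1 = S \cap D_i$ and $S_2 = S \setminus D_i$, and write the conditional probability as a ratio:
$$\Pr\Bigl(A_i \,\Big|\, \bigwedge_{j \in S} \overline{A_j}\Bigr) = \frac{\Pr\bigl(A_i \wedge \bigwedge_{j \in S_1} \overline{A_j} \,\big|\, \bigwedge_{j \in S_2} \overline{A_j}\bigr)}{\Pr\bigl(\bigwedge_{j \in S_1} \overline{A_j} \,\big|\, \bigwedge_{j \in S_2} \overline{A_j}\bigr)}.$$
The numerator is bounded above by $\Pr(A_i \mid \bigwedge_{j \in S_2} \overline{A_j}) = \Pr(A_i) \leq p$, using mutual independence of $A_i$ from the events indexed by $S_2$. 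For the denominator, enumerate $S_1 = \{j_1,\dots,j_r\}$ with $r \leq d$ and apply the chain rule once more; each resulting factor $\Pr(\overline{A_{j_k}} \mid \bigwedge_{l<k} \overline{A_{j_l}} \wedge \bigwedge_{j \in S_2} \overline{A_j})$ is at least $1-x$ by the inductive hypothesis applied to a conditioning set of size strictly less than $|S|$, so the denominator is at least $(1-x)^r \geq (1-x)^d$.

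Combining these two bounds yields
$$\Pr\Bigl(A_i \,\Big|\, \bigwedge_{j \in S} \overline{A_j}\Bigr) \leq \frac{p}{(1-x)^d} = \frac{p}{(1-\tfrac{1}{d+1})^d} \leq pe \leq \frac{1}{d+1} = x,$$
where I used the standard bound $(1-1/(d+1))^d \geq 1/e$ (the case $d=0$ is trivial since then $A_i$ is mutually independent of every other event) together with the hypothesis $ep(d+1) \leq 1$. The main delicate point is the denominator analysis: one must carefully verify that after expanding via the chain rule, each intermediate conditioning set is strictly smaller than $S$, so that the inductive hypothesis applies. A secondary technicality is that all the conditional probabilities appearing are well-defined, which also follows inductively since each denominator is bounded below by a positive quantity $(1-x)^{|S|} > 0$.
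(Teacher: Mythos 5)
Your argument is correct: it is the standard inductive proof of the symmetric Lov\'asz Local Lemma (prove $\Pr(A_i \mid \bigwedge_{j\in S}\overline{A_j}) \leq 1/(d+1)$ by induction on $|S|$, splitting the conditioning set into dependent and independent parts, then conclude by the chain rule), and you correctly handle the two usual technicalities, namely well-definedness of the conditional probabilities and the degenerate case $d=0$. The paper itself offers no proof of this lemma --- it is quoted as a classical result of Erd\H{o}s and Lov\'asz --- so there is nothing to compare against; your write-up is the canonical argument and would serve as a complete proof.
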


Recall the definition of suspicious paths (Definition~\ref{def:suspicious}).
We say that a path $P$ is  {\emph{suspicious} for a colour $c$} if there are vertices $u$, $v$ such that $P$ is suspicious for $(u,v,c)$.
The following observation follows in the same fashion as Observation~\ref{obs:number-suspicious} (using that $\gamma$ is a $1$-edge-colouring and conditions~\ref{itm:list-size-before},~\ref{itm:colour-neighbours-size-before} of Claim~\ref{cla:nibbler}).
We omit the proof.
\begin{observation}\label{obs:suspicous-path-for-c}
	For every uncoloured edge $e$, there are at most $L_iN_i^\ell$ paths of length at most $\ell$ which start with $e$ and are suspicious for some colour $c \in L(e)$.
\end{observation}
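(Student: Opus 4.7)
The proof mirrors Observation~\ref{obs:number-suspicious} closely. The bound $L_iN_i^\ell$ factors naturally as ``choice of colour times choice of path extension'', so my plan is to bound each factor separately.

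First, by condition~\ref{itm:list-size-before} of Claim~\ref{cla:nibbler}, we have $|L(e)|=L_i$, so there are at most $L_i$ options for $c\in L(e)$. It therefore suffices to show that, for each fixed colour $c$ (with twin $c'$), the number of paths of length at most $\ell$ starting with $e=uv$ that are suspicious for $(u,v',c)$ for some $v'$ is at most $N_i^\ell$.

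For such a fixed $c$, I would build the path iteratively from $v_2=v$ outward. At each intermediate vertex $v_j$, Definition~\ref{def:suspicious} requires the next edge $v_jv_{j+1}$ either (i) to be coloured with the specific colour of the $c$--$c'$ alternation forced by the parity of $j$, of which there is at most one at $v_j$ since $\gamma$ is a $1$-edge-colouring, or (ii) to be uncoloured with that specific colour in its list, of which there are at most $N_i$ at $v_j$ by condition~\ref{itm:colour-neighbours-size-before}. Hence each extension step admits at most $N_i+1$ options. Since the first edge is fixed to be $e$, there are at most $(N_i+1)^{k-1}$ such paths of length exactly $k$; summing the geometric series over $k=1,\dots,\ell$ and absorbing the $+1$ into $N_i$ (valid for $\Delta$ large enough, since $N_i$ grows with $\Delta$) yields at most $N_i^\ell$ paths per colour. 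Multiplying by the $L_i$ choices of $c$ gives the claimed bound.

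The main (mild) technicality is the per-step extension count, where one must handle coloured and uncoloured edges uniformly; both cases are immediate from the $1$-edge-colouring property of $\gamma$ together with condition~\ref{itm:colour-neighbours-size-before}. I expect no substantial obstacle, since the observation is essentially a direct analogue of Observation~\ref{obs:number-suspicious} in which the specified edge $e$ plays the role of a fixed initial edge rather than a free first extension from the starting vertex $u$.
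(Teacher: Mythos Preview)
Your proposal takes exactly the route the paper indicates (the proof is omitted there with a pointer to Observation~\ref{obs:number-suspicious}): fix the colour, then grow the path edge by edge, bounding the options at each vertex via the $1$-edge-colouring property and condition~\ref{itm:colour-neighbours-size-before}.

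One point to tighten: in context ``length'' means the \emph{uncoloured} length of Definition~\ref{def:suspicious} (this is what the dependency count in Section~\ref{sec:uniform-bounds} actually needs, since the paths in $P(u,v,c)$ are indexed by uncoloured length). Under that reading, your sum over total lengths $k\le\ell$ would miss suspicious paths with long coloured segments but few uncoloured edges. The fix is already implicit in your case split and in the proof of Observation~\ref{obs:number-suspicious}: your options (i) and (ii) are in fact mutually exclusive, because once an edge at a vertex $w$ retains a colour $c$, step~\ref{step:update-lists-and-coinflip} has removed $c$ from every other list at $w$, so the coloured continuation after each uncoloured edge is uniquely determined and contributes no branching. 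Counting one factor of $N_i$ per \emph{uncoloured} extension then gives at most $N_i^{k-1}$ paths of uncoloured length $k$ starting with $e$, and summing over $k\le\ell$ yields $N_i^{\ell}$; multiplying by the $L_i$ choices of $c$ gives the stated bound without needing to absorb the ``$+1$''.
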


Finally, we prove Claim~\ref{cla:simultaneously}.
\begin{proof}[Proof of Claim~\ref{cla:simultaneously}]
	We wish to apply the  Lov\'asz Local Lemma.
	To this end, let us discuss the dependencies between the random variables  $L'(e)$ , $N'(v,c)$, $R'(v,c)$,  $X(e)$,  $Y(v,c)$ and $Z(v,c)$.
	
	The variables $L'(e)$ , $N'(v,c)$ and $R'(v,c)$ are determined by the activations and colour assignments to edges of distance at most 2 to $v$ or $e$ in the graph $G$.
	Hence random variables of type $L'(e)$ , $N'(v,c)$ and $R'(v,c)$ are each independent of all but at most $(2N_iL_iR_i)^2$ random variables of type $L'(\cdot)$ , $N'(\cdot,\cdot)$ and $R'(\cdot,\cdot)$.

	The dependency relations associated with random variables of type $X(e)$, $Y(v,c)$ and $Z(v,c)$ are a bit more delicate.
	This is because the events determining these variables are not constrained by graph distance.

	Consider an uncoloured edge $e$.
	In the following, we bound the number of random variables of type $X(\cdot)$ that $X(e)$ is not independent of.
	Let $f$ be another uncoloured edge.
	Observe that the random variables $X(e)$ and $X(f)$ are independent unless the following holds.
	There are suspicious paths $P,P'$ for colours $c \in L(e)$, $c'\in L(f)$ that start with $e,f$, respectively.
	Moreover, $P$ and $P'$ have each at most $\ell$ uncoloured edges and cross in an uncoloured edge  $g$.
	In light of Observation~\ref{obs:suspicous-path-for-c}, it follows that $X(e)$ is independent of all but at most $(L_iN_i^\ell)^2$ random variables of type  $X(f)$.
	Note that the first factor $L_iN_i^\ell$ comes from counting the suspicious paths starting at $e$, while the second factor comes from counting the suspicious paths starting at $g$ (as described before).
	
	In the same way, we can argue that $X(e)$ is independent of all but at most $(L_iN_i^\ell)(L_iN_i^{\ell+1})$ random variables of type $Y(v,c)$.
	Note that here, the second factor counts the at most $L_iN^{\ell}$ suspicious paths ending in one of the at most $N_i$ edges of $N(v,c)$.
	We also obtain independence for $X(e)$ of all but at most $(L_iN_i^\ell)(L_iN_i^{\ell}R_i)$ random variables of type  $Z(v,c)$.
	Finally, we see that $X(e)$ is independent of all but at most $(L_iN_i^\ell)(2L_i^2)$, $(L_iN_i^\ell)(2N_i^2)$ and $(L_iN_i^\ell)(2R_i^2)$  random variables $L'(f)$, $N'(v,c)$ and $R'(v,c)$, respectively.
	This concludes the analysis of the dependency relations for the random variable $X(e)$.
	
	Analogous statements can be made from the perspective of random variables $Y(v,c)$, $Z(v,c)$, $L'(e)$, $N'(v,c)$ and $R'(v,c)$.
	Since the arguments are identical, we omit the details.
	In summary, each of the discussed random variables is independent of all but at most 
	$$d= 6(2L_i N_i^{\ell+1} R_i)^2 \leq \Delta^{4\ell} = \Delta^{8\log \Delta} \ll \Delta^{10\log \Delta}$$ 
	of the other random variables.
	Hence we can apply the  Lov\'asz Local Lemma (Lemma~\ref{lem:LLL}) with $d$ and $p=\Delta^{-10\log \Delta}$. 
	It follows that with positive probability the outcome satisfies the conclusions of Claim~\ref{cla:MR-edge-bounds}  and~\ref{cla:cycle-tweak} hold simultaneously for all edges $e$, vertices $v$ and colours $c$.
\end{proof}

\section*{Acknowledgements}
We thank the referees for carefully reading the manuscript, which has lead to an improved presentation of the proofs.
 
\bibliographystyle{amsplain}
\bibliography{bibliography}

\newpage
\appendix

\section{Proof of Lemma~\ref{lem:sizes}}\label{sec:sizes-R/L}

The statement of Lemma~\ref{lem:sizes} has been slightly extended by adding the bound on the ratio $R_i/L_i$.
We remark that this relation is easily implied by the proof of Molloy and Reed~\cite{MR00}.
For completeness, we present the details.

\begin{proof}[Proof of $R_i/L_i \leq \log \Delta$ in Lemma~\ref{lem:sizes}]
	Let us start by setting up `untainted' versions of the constants $R_i$, $L_i$ and $\K_i$.
	(Note that $\R_i$ is used in its old state.)
	As in Setup~\ref{setup}, let $\p=1/4$.
	We then recursively define
	\begin{align*}
	L_0^* &= \Delta,	 & L_{i+1}^* &= L_i^* \cdot (\K_i^*)^2,
	\\ N_0^* &= \Delta, & N_{i+1}^* &= N_i^* \cdot  \K_i^*  \cdot \left(1- \p \R_i^2   \right),
	\\R_0^* &=  2\sqrt{\Delta} \log^4 \Delta  , & R_{i+1}^* &= R_i^* \cdot \left(1- \p \R_i^2   \right), 
	\\ 	  &  & \K_i^* &=  1- \p  \frac{N_i^*}{L_i^*} \cdot \R_i^2.
	\end{align*}
	A straightforward induction, yields that $L_i^* = N_i^*$ and $\K_i^*=1-p\R_i^2$ for all $i \geq 1$.
	Therefore, 
	\begin{align*} 
		R_i^* =  2\sqrt{\Delta} \log^4 \Delta \cdot \prod_{j=1}^{i-1} (1- p\R_j^2) \qquad \text{ and }  \qquad L_i^* = N_i^* = \Delta \prod_{j=1}^{i-1} (1- p\R_j^2)^2.
	\end{align*}
	Hence, for $L_i^* > \log^7 \Delta$, we have 
	\begin{align*} 
		\frac{R_i^*}{L_i^*}  = \frac{2 \log^4 \Delta}{\sqrt{\Delta} \prod_{j=1}^{i-1} (1- p\R_j^2)} < 2 \sqrt{\log  \Delta}.   
	\end{align*}
	It can be shown that the $R_i,L_i$ are not very far from  $R_i^*,L_i^*$ respectively.
	More precisely, for every $1 \leq i \leq i_0$, it holds that
	\begin{align*} 
		R_i \leq R_i^* + \sqrt{R_i^*} \log^{2.5} \Delta\qquad \text{ and }  \qquad L_i \geq L_i^* - \sqrt{L_i^*} \log^{2.5} \Delta.
	\end{align*}
	
	Let us prove the first statement, since its prove is only given implicitly in the paper of Molloy and Reed~\cite{MR00}.
	Assuming that the claim holds for $i < i_0$ and $\Delta$ is sufficiently large, we have 
	\begin{align*}
		R_{i+1} &= R_{i} \cdot \left(1- \p \R_{i}^2   \right) + R_{i} \log^2 \Delta \\
		 		&\leq (R_i^* + \sqrt{R_i^*} \log^{2.5} \Delta)    \cdot \left(1- \p \R_{i}^2   \right) + R_{i} \log^2 \Delta \\
		 		&\leq R_{i+1}^* +  \sqrt{R_{i+1}^*} \log^{2.5} \Delta  \cdot \sqrt{1- \p \R_{i}^2 } + R_{i} \log^2 \Delta \\
		 		&\leq R_{i+1}^* +  \sqrt{R_{i+1}^*} \log^{2.5} \Delta .
	\end{align*}
	
	Given this, we can easily bound the ratio.
	More precisely, while $L_i^*, R^*_i > \log^7 \Delta$, it follows that
	\begin{align*} 
	  		\frac{R_i}{L_i}  \leq  	2\frac{R_i^*}{L_i^*}  <   4  \sqrt{\log  \Delta} < \log \Delta.  
	\end{align*}
\end{proof}

\section{Proof of Claim~\ref{cla:MR-edge-bounds}}\label{sec:MR-edge-bounds}
The proof of Claim~\ref{cla:MR-edge-bounds} can be found in the monograph of Molloy and Reed~\cite{MR13}.
The only difference in our setting is that we use an activation probability and slightly different constants.
However, this does not require any meaningful change in the steps in the proof.
The following arguments are therefore almost word by word identical to the ones of Molloy and Reed.

We prove  Claim~\ref{cla:MR-edge-bounds} using a concentration analysis.
To this end, we first bound the expectation of the random variables in question (Subsection~\ref{sec:MR-expectation}) and then show that these random variables are highly concentrated around their expectation (Subsection~\ref{sec:MR-concentration}) .
\subsection{Expectation} 
\label{sec:MR-expectation}
The following three claims bound the expectations of $R'(v,c)$, $L'(e)$ and $N'(v,c)$.

\begin{claim}[{\cite[Claim 14.9]{MR13}}]
	We have	$\Exp(|R'(v,c)|)  \leq  R_i \cdot  \K_i$ for every vertex $v$ and colour $c$.
\end{claim}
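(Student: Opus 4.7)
The plan is to exploit linearity of expectation together with the inclusion $R'(v,c) \subseteq R(v,c)$. Since the reservations $\Res(w)$ are fixed at the outset and $\gamma'$ extends $\gamma$, any edge in $R'(v,c)$ already belongs to $R(v,c)$, and it lies in $R'(v,c)$ precisely when it has not become coloured during steps~\ref{step:activate}--\ref{step:unassign-and-coinflip}. (Step~\ref{step:update-lists-and-coinflip} only alters lists, so does not affect which edges are coloured.) Thus
\begin{align*}
    \Exp(|R'(v,c)|) \;=\; \sum_{e \in R(v,c)} \Pr[e \text{ is uncoloured in } \gamma'],
\end{align*}
and it suffices to show that $\Pr[e \text{ uncoloured in } \gamma'] \leq \K_i$ for every $e \in R(v,c)$.

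Fix such an edge $e = vw$ and a colour $c' \in L(e)$. I would decompose the event ``$e$ is assigned $c'$ and retains it'' into four independent sub-events: $e$ is activated (probability $\p$); $e$ is assigned $c'$ (probability $1/L_i$); no other edge incident to $v$ or $w$ whose list contains $c'$ is itself assigned $c'$ (probability $(1-\p/L_i)^{|N(v,c')|+|N(w,c')|-2}$ by the independence of distinct assignments); and $e$ survives the equalising coin flip in step~\ref{step:unassign-and-coinflip} (probability $1-\Eq(e,c')$). The whole point of the definition~\eqref{equ:Eq} of $\Eq(e,c')$ is that the product of these four factors telescopes to the $c'$-independent value $\frac{\p \R_i^2}{L_i(1-\p/L_i)^2}$. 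Summing over the $L_i$ colours in $L(e)$, the probability that $e$ acquires and retains some colour equals $\frac{\p \R_i^2}{(1-\p/L_i)^2}$.

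Finally, since $L_i > N_i$ by Lemma~\ref{lem:sizes} (so certainly $L_i > N_i(1-\p/L_i)^2$), we have $\frac{1}{(1-\p/L_i)^2} \geq \frac{N_i}{L_i}$, whence $\Pr[e \text{ uncoloured in } \gamma'] \leq 1 - \p \frac{N_i}{L_i}\R_i^2 = \K_i$. Combined with $|R(v,c)| \leq R_i$ from assumption~\ref{itm:reserved-size-before} of Claim~\ref{cla:nibbler}, this yields $\Exp(|R'(v,c)|) \leq R_i \cdot \K_i$, as required. There is no substantial obstacle in the argument: the only mild delicacy is the two-unit discrepancy between the exponent in the no-conflict probability and the exponent in~\eqref{equ:Eq} (which arises because $e$ itself lies in both $N(v,c')$ and $N(w,c')$ when $c' \in L(e)$), but the resulting factor $(1-\p/L_i)^{-2}$ is comfortably absorbed when passing to $N_i/L_i$ in the final inequality.
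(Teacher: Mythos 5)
Your proposal is correct and follows essentially the same route as the paper's proof: bound $\Pr[e \text{ uncoloured in } \gamma']$ per edge $e \in R(v,c)$ and apply linearity of expectation together with the hypothesis $|R(v,c)| \leq R_i$. You are in fact slightly more careful than the paper's write-up, which asserts the retention probability (given assignment) equals $\R_i^2$ exactly when it is really $\R_i^2/(1-\p/L_i)^2$; your final step correctly absorbs this harmless extra factor via $N_i < L_i$.
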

\begin{proof}
	Fix an edge $e$, a vertex $v$ and a colour $c$.
	Suppose that $e$ is assigned colour $c$ in step~\ref{step:assign}  of the procedure.
	Recall the definition of $\R_i$, $\K_i$ in  Setup~\ref{setup}  and $\Eq$ in~\eqref{equ:Eq}.
	It follows that the probability that $e$ retains $c$ after step~\ref{step:unassign-and-coinflip} is
	\begin{align*}
	\left(1-  \frac{\p}{L_i} \right)^{| N(u,c)|+| N(v,c)|-2} \cdot (1-\Eq(e,c)) =   \R_i^2,
	\end{align*}
	where each factor $1-  \frac{\p}{L_i}$ represents the probability that one of the colour neighbours of $e$ is assigned $c$.
	(The exponent of $-2$ comes from the fact that we do not count the edge $e$ in either case.)
	Hence, if $e$ is uncoloured at the beginning, then the probability that $e$ is coloured after step~\ref{step:update-lists-and-coinflip} is
	\begin{align*}
	 1-  \p     \R_i^2  \leq 1- \p\frac{N_i}{L_i}   \R_i^2= \K_i,
	\end{align*}
	where $\p$ is the probability that $e$ is activated.
	Given this, the claim follows by linearity of expectation.
\end{proof}

\begin{claim}[{\cite[Claim 14.10]{MR13}}]
	We have	$\Exp(|L'(e)|) \geq L_i \cdot \K_i^2 $ for every edge $e$ and colour $c$.
\end{claim}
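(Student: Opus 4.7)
The plan is to lower bound $\Pr(c \in L'(e))$ by $\K_i^2$ for each fixed colour $c \in L_i(e)$, and then obtain $\Exp(|L'(e)|) \ge L_i \cdot \K_i^2$ by linearity of expectation, using that $|L_i(e)| = L_i$ by condition~\ref{itm:list-size-before} of Claim~\ref{cla:nibbler}. As in the previous claim, the proof is essentially the one of Molloy and Reed, with only the activation probability $\p$ requiring minor notational adjustments.

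Fix $c \in L_i(e)$ and write $e = uv$. The colour $c$ remains in $L'(e)$ iff it survives the list update at \emph{both} endpoints $u$ and $v$. At a single endpoint, say $v$, the colour $c$ survives iff either $e$ itself retains $c$ (in which case $e$ is the retaining edge in $N(v,c)$ and $c$ is not removed from $L(e)$), or no edge of $N(v,c)$ retains $c$ and the $\Vq(v,c)$ coin does not fire. The $\Eq$ coin flip in step~\ref{step:unassign-and-coinflip} is engineered so that $\Pr(f \text{ retains } c) = \p\R_i^2/L_i$ uniformly for each $f \in N(v,c)$. Since retention events at distinct edges sharing the vertex $v$ are pairwise disjoint, $\Pr(\text{no edge of } N(v,c) \text{ retains } c) = 1 - |N(v,c)|\,\p\R_i^2/L_i$. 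Combined with the definition $1 - \Vq(v,c) = \K_i/(1 - |N(v,c)|\p\R_i^2/L_i)$, the second disjunct contributes exactly $\K_i$ to the probability $c$ survives at $v$.

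For the joint survival at both $u$ and $v$, note that the two $\Vq$ coins are independent of one another and of all other randomness, contributing a factor $(1 - \Vq(u,c))(1 - \Vq(v,c))$. What remains is to bound $\Pr(\text{no edge of } N(u,c) \cup N(v,c) \text{ retains } c)$ from below by the product of the corresponding single-endpoint probabilities. This negative-correlation statement follows from the pairwise disjointness of retention events at each vertex together with the uniformity granted by the $\Eq$ coin flip, via a direct inclusion--exclusion argument. Combining with the $\Vq$-identity at each endpoint we get
\begin{align*}
\Pr(c \in L'(e)) \;\geq\; \Pr(e \text{ retains } c) + \K_i^2 \;\geq\; \K_i^2,
\end{align*}
and summing over $c \in L_i(e)$ yields $\Exp(|L'(e)|) \ge L_i \cdot \K_i^2$.

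The main obstacle is the joint lower bound on the ``no edge retains'' probability at $u$ and $v$, since these events share some dependence through the random assignments at neighbouring edges. Once this (routine but careful) step is checked, everything else is arithmetic with the definitions of $\R_i$, $\K_i$, $\Eq$ and $\Vq$.
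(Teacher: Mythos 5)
Your route is the paper's route: fix $c\in L_i(e)$, isolate the two independent $\Vq$-coins, and lower-bound the joint no-retention probability against the product of the single-endpoint probabilities, which each cancel exactly against $1-\Vq$ to give $\K_i$. The single-endpoint computation is correct (pairwise disjointness of retention events at $v$ gives $\Pr(\text{no edge of }N(v,c)\text{ retains }c)=1-|N(v,c)|\p\R_i^2/L_i$), and your observation that you even pick up an extra $\Pr(e\text{ retains }c)$ is a valid, if unneeded, bonus.

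The gap is in the ``negative-correlation'' step, which you assert but do not establish, and which does not follow from ``pairwise disjointness plus uniformity'' alone. Inclusion--exclusion over $N(u,c)\cup N(v,c)$ does terminate at the quadratic term (no three retentions can co-occur), so after the linear terms one must show that the positive contribution of the non-incident pairs $f\in N(u,c)$, $f'\in N(v,c)$ is at least about $|N(u,c)|\,|N(v,c)|\,(\p\R_i^2/L_i)^2$. This requires two separate pieces of work: (i) a count of non-incident pairs, at least $(|N(u,c)|-1)(|N(v,c)|-1)-N_i$ since each edge of $N(u,c)$ is incident to at most one edge of $N(v,c)$; and (ii) the pointwise lower bound $\Pr(R_f\cap R_{f'})\ge(\p\R_i^2/L_i)^2$, obtained by writing the joint retention probability with a factor $(1-\p/L_i)^{|N(u,c)\cup N(w,c)\cup N(v,c)\cup N(x,c)|-2}$ and noting the cardinality of the union is strictly less than the sum of the four sizes, so this factor is at least $\R_i^4\,(1-\Eq(f,c))^{-1}(1-\Eq(f',c))^{-1}$. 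Part (ii) is not a consequence of the $\Eq$-coin's uniformity of marginals; it is a genuine computation about the overlap structure of the collision events, and it constitutes most of the paper's proof of this claim. Deferring it as ``routine but careful'' leaves the proof incomplete precisely where the content is.
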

\begin{proof}
	Consider an edge $e = uv$ and colour $c \in L(e)$. 
	We show that the probability that $c \in L_i(e)$ is at most $\K_i^2$.
	From this, the claim follows again by linearity of expectation.
	
	Let $K_u$, $K_v$ be the events that no edge in $N(u,c) \sm e$, respectively $N(v,c) \sm e$
	retains $c$ after step~\ref{step:unassign-and-coinflip} of the procedure. 
	It turns out that the simplest way to compute $\Pr(K_u \cap K_v)$ is
	through the indirect route of computing $\Pr(\overline{K_u} \cup \overline{K_v})$ which is equal to
	$1 - \Pr(K_u \cap K_v)$. 
	Now, by the most basic case of the Inclusion-Exclusion Principle, 
	$\Pr(\overline{K_u} \cup \overline{K_v})$ is equal to $\Pr(\overline{K_u})+ \Pr(\overline{K_v})- \Pr(\overline{K_u} \cap \overline{K_v})$.
	We know that $$\Pr(\overline{K_u}) + \Pr(\overline{K_v}) =  (|N(u,c)|+|N(v,c)|-2)  \frac{\p}{L_i} \R_i,$$ so we just need to
	bound $\Pr(\overline{K_u} \cap \overline{K_v})$.
	
	$\Pr(\overline{K_u} \cap \overline{K_v})$ is the probability that there is some pair of non-incident
	edges $e_1 = uw \in N(u,c)\sm e$, and $e_2 = vx \in N(v,c)\sm e$ such that $e_1$ and $e_2$ are
	both activated, receive and retain $c$ during the procedure.
	Note that, as $e_1$ and $e_2$ are non-incident, this is $w \neq x$. 
	Now, for each such pair, we let $R_{e_1 ,e_2}$ be the event that $e_1$ and $e_2$ both retain $c$ after step~\ref{step:unassign-and-coinflip}.
	It follows that
	\begin{align*}
		\Pr(R_{e_1 ,e_2})= \left(\frac{\p}{L_i}\right)^2 \left(1- \frac{\p}{L_i}\right)^{|N(v,c) \cup N(w,c) \cup N(v,c)  \cup N(x,c)|-2} (1-\Eq(e_1,c)) (1-\Eq(e_2,c)).
	\end{align*}
	Since $|N(u,c) \cup N(w,c) \cup N(v,c)  \cup N(x,c)|< |N(u,c) | + |N(w,c)| -1 + |N(v,c)|  +| N(x,c)| -1$,
	we obtain:
	\begin{align*}
		\Pr(R_{e_1 ,e_2}) >   \left(\frac{\p}{L_i}  \R_i^2 \right)^2.
	\end{align*}
	
	It is easy to see that there are at most $N_i$ incident pairs $e_1 \in N(u,c) \sm e$, $e_2 \in N(v,c) \sm e$, as each edge in $N(u,c) \sm e$ is incident with at most one edge in $N(v,c) \sm e$.
	Therefore the number of non-incident pairs $e_1 \in N(u,c) \sm e$, $e_2 \in N(v,c) \sm e$ is at
	least  $(|N(u,c)|-1)(|N(v,c)|-1)-N_i$.
	
	For any two distinct pairs $( e_1, e_2)$ and $( e_1',e_2')$, it is impossible for $R_{e_1 ,e_2}$
	and $R_{e_1' ,e_2'}$ to both hold. 
	Therefore the probability that $R_{e_1 ,e_2}$ holds for at least one non-incident pair, is equal to the sum over all non-incident pairs $e_1, e_2$ of $\Pr(R_{e_1 ,e_2})$, which by the above remarks yields:
	\begin{align*}
		\Pr(\overline{K_u} \cap \overline{K_v}) &\geq  \left((|N(u,c)|-1)(|N(v,c)|-1)-N_i\right)  \left(\frac{\p}{L_i}  \R_i^2 \right)^2
	\end{align*}
	Combining this with our bound  on $\Pr(\overline{K_u})+ \Pr(\overline{K_v})$, we see that:
	\begin{align*}
		\Pr( {K_u} \cup {K_v}) &\geq 1 -   (|N(u,c)|+|N(v,c)|-2)  \frac{\p}{L_i} \R_i  
		\\&\quad+ \left((|N(u,c)|-1)(|N(v,c)|-1)-N_i\right)  \left(\frac{\p}{L_i}  \R_i^2 \right)^2
		\\&\geq \left(1-   \frac{ \p}{L_i} (|N(u,c)|-1) \R_i^2 \right) \left(1-   \frac{\p}{L_i}(|N(v,c)|-1) \R_i^2 \right) - \left(\frac{\p}{L_i}  \R_i^2 \right)^2
		\\&\geq \left(1-  \frac{ \p}{L_i} (|N(u,c)|-1) \R_i^2 \right) \left(1-  \frac{\p}{L_i}(|N(v,c)|-1)  \R_i^2 \right) . 
	\end{align*}
	Recall that definition of $\Vq$ in~\eqref{equ:Vq}.
	It follows  that
	\begin{align*}
		\Pr( c \in L'(e)) &=\Pr(K_u \cup K_v)(1-\Vq(u,c))  (1-\Vq(v,c))  
		\\&\geq \left(1 - \p \frac{|N(u,c)|}{L_i} \R_i^2\right)(1-\Vq(u,c)) \cdot  \left(1 - \p \frac{|N(v,c)|}{L_i} \R_i^2\right)(1-\Vq(v,c)) 
		\\&= \K_i^2,
	\end{align*}
	as desired.
\end{proof}

It remains to deal with $N'(v,c)$.
As it turns out, this random variable is not concentrated around its expected value.
The reason for this is that, if colour $c$ is assigned to one of the edges of $N(v,c)$ then the size of $|N'(v,c)|$ drops immediately to zero.
We will therefore carry out the expectation and concentration details for a different variable $N^*(v,c)$, which ignores the assignments to edges of $N(v,c)$. 
More precisely, let $N^*(v,c)$ be the set of edges $uv \in N(v,c)$ such that $uv$ does not retain a colour and no edge in $N(u,c)$ retains $c$.
Since $N'(v,c) \subset N^*(v,c)$, it suffices to focus the analysis on $N^*(v,c)$.

\begin{claim}[{\cite[Claim 14.10]{MR13}}]
	We have	$\Exp(|N^*(v,c)|)  \leq N_i \cdot  \K_i  \cdot \left(1- \p \R_i^2   \right) +1$ for every vertex $v$ and colour $c$.
\end{claim}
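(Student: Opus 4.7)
The bound will follow by linearity of expectation, $\Exp(|N^*(v,c)|) = \sum_{e \in N(v,c)} \Pr(e \in N^*(v,c))$. My plan is to fix $e = uv \in N(v,c)$, write $A_e$ for the event that $e$ retains no colour and $C_e$ for the event that no edge of $N(u,c)$ retains $c$ (so $\{e \in N^*(v,c)\} = A_e \cap C_e$), and bound $\Pr(A_e \cap C_e)$ by conditioning on whether $e$ is assigned colour $c$ in step~\ref{step:assign}.

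First consider the case that $e$ is assigned $c$ (probability $\p/L_i$). Here the event $C_e$ holds automatically: every edge of $N(u,c)$ shares vertex $u$ with $e$, so any other edge of $N(u,c)$ also assigned $c$ would conflict with $e$ in step~\ref{step:unassign-and-coinflip} and neither would retain $c$. The only remaining requirement is that $e$ itself does not retain $c$, which has conditional probability $1 - \R_i^2$ by the same computation as in the $|R'(v,c)|$ bound of Claim~\ref{cla:MR-edge-bounds}. This case therefore contributes $(\p/L_i)(1 - \R_i^2)$.

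Next consider the case that $e$ is not assigned $c$ (probability $1 - \p/L_i$). Now $A_e$ is determined by $e$'s potential retention of some colour $c' \neq c$, whereas $C_e$ depends only on retentions of $c$ by the edges in $N(u,c) \setminus \{e\}$. Since any two edges of $N(u,c)$ share vertex $u$ and therefore conflict if both assigned $c$, at most one such edge can retain $c$; so the event $\overline{C_e}$ is a disjoint union of single-edge retentions, giving $\Pr(\overline{C_e} \mid e \text{ not assigned } c) \approx (|N(u,c)| - 1)(\p/L_i)\R_i^2$. The $\Eq$ coin flip in step~\ref{step:unassign-and-coinflip} is tuned so that, modulo $O(\p/L_i)$ corrections, $A_e$ and $C_e$ are independent conditional on $e$ not being assigned $c$; combined with $\Pr(A_e \mid e \text{ not assigned } c) \approx 1 - \p\R_i^2$ this yields $\Pr(A_e \cap C_e \mid e \text{ not assigned } c) \leq (1 - \p\R_i^2)\bigl(1 - (|N(u,c)|-1)(\p/L_i)\R_i^2\bigr) + O(\p/L_i)$.

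Combining both cases, invoking $|N(u,c)| \leq N_i$ from the hypotheses of Claim~\ref{cla:nibbler} and $N_i \leq L_i$ from Lemma~\ref{lem:sizes}, the per-edge probability simplifies to at most $\K_i(1 - \p\R_i^2) + O(\p/L_i)$. Summing over the at most $|N(v,c)| \leq N_i$ edges yields the principal term $N_i \cdot \K_i \cdot (1 - \p\R_i^2)$, while the cumulative error $|N(v,c)| \cdot O(\p/L_i) \leq O(N_i/L_i) = O(1)$ is absorbed into the $+1$ slack. The main obstacle will be bookkeeping the $O(\p/L_i)$-order corrections arising from the weak dependencies between $A_e$ and $C_e$ and from conditioning on $e$'s assignment; the $\Eq$ coin flip is designed precisely so that retention probabilities equal $\R_i^2$ up to such lower-order terms, allowing the aggregate error to collapse to a constant.
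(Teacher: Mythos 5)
Your approach via conditioning on whether $e$ is assigned $c$ is a genuinely different decomposition from the paper, which proceeds directly by inclusion--exclusion: writing $A$ for the event that $e$ retains no colour and $B$ for the event that no edge of $N(u,c)$ retains $c$, the paper uses $\Pr(A \cap B) = \Pr(A) - \Pr(\overline{B}) + \Pr(\overline{A} \cap \overline{B})$ and then explicitly bounds $\Pr(\overline{A} \cap \overline{B})$ by summing disjoint pair-retention events $Z(d,f)$. Your conditioning is not unreasonable, but it hides rather than avoids the same work, and there are two substantive gaps.

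First, the claim that ``the $\Eq$ coin flip is tuned so that, modulo $O(\p/L_i)$ corrections, $A_e$ and $C_e$ are independent conditional on $e$ not being assigned $c$'' is a misattribution. The $\Eq$ flip equalizes per-edge retention probabilities to $\R_i^2$; it does nothing to decouple $A_e$ from $C_e$, which are correlated through the shared vertex $u$ and through overlapping conflict structure. The near-independence is true, but it is precisely the technical content of the proof (the paper's bound $\Pr(\overline{A}\cap\overline{B}) \leq \p^2\frac{|N(u,c)|}{L_i}\R_i^4 + \frac{1}{L_i}$), and it must be computed, not asserted.

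Second, and more seriously, you have omitted the $\Vq$ coin flip from step~\ref{step:update-lists-and-coinflip}, and as a result your final simplification runs in the wrong direction. You arrive at a per-edge factor of roughly $1 - \p\frac{|N(u,c)|}{L_i}\R_i^2$, and ``invoke $|N(u,c)| \leq N_i$'' to replace it by $\K_i = 1 - \p\frac{N_i}{L_i}\R_i^2$. But $|N(u,c)| \leq N_i$ gives $1 - \p\frac{|N(u,c)|}{L_i}\R_i^2 \geq \K_i$, not $\leq \K_i$. The whole point of the $\Vq(u,c)$ coin flip is to artificially remove $c$ from the lists around $u$ with additional probability $\Vq(u,c)$ so that the combined probability becomes exactly $\K_i$; indeed $\Vq$ is defined in~\eqref{equ:Vq} so that $\bigl(1 - \p\frac{|N(u,c)|}{L_i}\R_i^2\bigr)\bigl(1-\Vq(u,c)\bigr) = \K_i$. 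The paper multiplies $\Pr(A\cap B)$ by $(1-\Vq(u,c))$ to obtain $\K_i(1-\p\R_i^2) + \frac{1}{L_i}$ per edge. Without this factor your per-edge bound is too large and the claimed estimate does not follow.
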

\begin{proof}
	We will show that for each $e = uv$ in $N(v,c)$, we have: $\Pr(e \in N^*(v,c)) \leq \left(1- \p \R_i^2   \right) + \frac{1}{L_i}$.
	As before this implies the result by linearity of expectation.
	
	We define $A$ to be the event that $e$ does no retain its colour and $B$ to be the event that no edge in $N(u,c)$ retains $c$.
	We wish to bound $\Pr(A \cap B)$.
	Once again, we proceed in an indirect manner, and focus instead on $\Pr(\overline{A} \cap \overline{B})$, showing that $\Pr(\overline{A} \cap \overline{B}) \leq \p^2 \frac{|N(u,c)|}{L_i} \R_i^4+ \frac{1}{L_i}$, thus implying
	\begin{align*}
		\Pr(A \cap B) &= \Pr(A) - \Pr(\overline{B}) + \Pr(\overline{A} \cap \overline{B}) 
		\\ &\leq \left(1-\p \R_i^2 \right) - \p \frac{|N(u,c)|}{L_i}\R_i^2 + \p^2 \frac{|N(u,c)|}{L_i} \R_i^4+ \frac{1}{L_i}
		\\ &= \left(1-\p \frac{|N(u,c)|}{L_i}\R_i^2 \right) \left(1-\p \R_i^2 \right) + \frac{1}{L_i}.
	\end{align*}
	Therefore 
	\begin{align*}
		\Pr(e \in N^*(v,c)) = \Pr(A \cap B) (1-\Vq(u,c)) \leq \K_i \left(1-\p \R_i^2 \right) + \frac{1}{L_i}.
	\end{align*}
	
	For each colour $d \in L(e)$ and edge $f=uw$ in $N(u,c) \sm e$, we define $Z(d,f)$ to be the event that $e$ retains $d$ and $f$ retains $c$.
	For each $d\neq c$, we have
	\begin{align*}
		\Pr(Z(d,f)) &= \left(\frac{\p}{L_i}\right)^2 \left(1-\frac{2\p}{L_i}\right)^{|\left[(N(v,d) \cap N(w,c))\cup (N(u,d)\cap N(u,c)) \right] \sm \{e,f\}|}
		\\&\quad \cdot \left(1-\frac{\p}{L_i}\right)^{|\left[N(v,d) \cup N(w,c)\cup N(u,d)\cup N(u,c) \right] \sm [(N(v,d) \cap N(w,c))\cup (N(u,d)\cap N(u,c)) \cup \{e,f\}]|}
		\\&\quad \cdot (1-\Eq(e,d)) (1-\Eq(f,c))
		\\&\leq \left(\frac{\p}{L_i}\right)^2
		\\&\quad \cdot \left(1-\frac{\p}{L_i}\right)^{|\left[N(v,d) \cup N(w,c)\cup N(u,d)\cup N(u,c) \right] \sm \{e,f\}| +|([N(v,d) \cap N(w,c))\cup (N(u,d)\cap N(u,c))] \sm \{e,f\}|}
		\\&\quad \cdot (1-\Eq(e,d)) (1-\Eq(f,c)).
	\end{align*}
	Note that
	\begin{align*}
		&\quad |\left[N(v,d) \cup N(w,c)\cup N(u,d)\cup N(u,c) \right] \sm \{e,f\}| 
		\\&\quad +|([N(v,d) \cap N(w,c))\cup (N(u,d)\cap N(u,c))] \sm \{e,f\}| 
		\\&= |N(v,d)\sm \{e\}| + |N(u,d)\sm \{e,f\}| + |N(u,c)\sm \{e,f\}| +  |N(w,c)\sm \{f\}|
		\\&\geq |N(v,d)| + |N(u,d) | + |N(u,c) | +  |N(w,c) | - 6.
	\end{align*}
	Hence, we obtain
	\begin{align*}
		\Pr(Z(d,f)) \leq \left(\frac{\p}{L_i} \right)^2 \left(1-\frac{\p}{L_i}\right)^{-2}  \R_i^4 = \frac{\p^2}{(L_i -\p)^2} \R_i^4.
	\end{align*}
	Since the events $Z(d,f)$ are disjoint, we have:
	\begin{align*}
		\Pr(\overline{A} \cap \overline{B}) &= \frac{\p}{L_i} \R_i^2 + \sum_{d \in L(e)\sm\{c\},f\in N(u,c)} \Pr(Z(d,f))
		\\&\leq \frac{\p}{L_i} \R_i^2 + L_i |N(u,c)| \frac{\p^2}{(L_i -\p)^2} \R_i^4  
		\\&< \frac{1}{L_i} + \p^2\frac{|N(u,c)|}{L_i}\R_i^4.
	\end{align*}
\end{proof}

\subsection{Concentration}\label{sec:MR-concentration}
The following three claims contain the desired concentration bounds.

\begin{claim}\label{cla:concentration-R}
	For every vertex $v$ and colour $c$, we have
	\begin{align*} 	
		\Pr(\left||R'(v,c)|-\Exp(|R'(v,c)|)\right|> \tfrac{1}{2} \sqrt{R_i} \log^2 \Delta) \leq \frac{\Delta^{-10\log \Delta}}{3}.
	\end{align*}
\end{claim}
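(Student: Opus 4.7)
The plan is to apply Talagrand's inequality (Theorem~\ref{thm:talagrand}) to $X = |R'(v,c)|$, viewed as a function of the independent trials of the random colouring procedure. These trials consist of: for each edge $f$, a single trial $T(f)$ capturing both the activation status (step~\ref{step:activate}) and, if activated, the assigned colour (step~\ref{step:assign}); for each pair $(f,d)$, the $\Eq$ coin flip used in step~\ref{step:unassign-and-coinflip}; and for each pair $(u,d)$, the $\Vq$ coin flip used in step~\ref{step:update-lists-and-coinflip}. Since $R'(v,c)$ depends only on the colouring $\gamma'$, which is already determined after step~\ref{step:unassign-and-coinflip}, the $\Vq$ coin flips do not influence $X$ and can be ignored.

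First I would check the Lipschitz condition with a constant $c = O(1)$. Flipping one trial $T(f)$ (say from assigned $c^*$ or inactive to assigned $d^*$ or inactive) can change the coloured/uncoloured status of only $O(1)$ edges: namely $f$ itself, together with edges adjacent to $f$ whose assigned colour matches the old or new colour of $f$. The crucial observation is that whenever two or more edges through a common endpoint of $f$ are assigned the same colour, they already conflict with each other through that shared vertex, so their status is completely insensitive to $T(f)$. Hence for each relevant colour at most one edge through each of the two endpoints of $f$ can flip status. Flipping one $\Eq$ coin flip affects only the status of its associated edge. Consequently, a single trial modification changes $X$ by a bounded constant.

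Next I would verify the certification condition with $r = 2$. Whenever $X \geq s$ there are $s$ uncoloured edges in $R(v,c)$, and for each such edge $e$ we supply a certificate of at most two trial outcomes: either $T(e)$ records ``inactive''; or $T(e) = c_e$ and some adjacent $g$ has $T(g) = c_e$ (yielding a conflict); or $T(e) = c_e$ together with $\Eq(e,c_e) = $ ``uncolour''. In the third case the certificate need not rule out a conflict, since if one exists then step~\ref{step:unassign-and-coinflip} still uncolours $e$ via the conflict rule, and otherwise via the $\Eq$ rule; either way $e$ is uncoloured.

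Finally, using the expectation bound $\Exp(X) \leq R_i \K_i \leq R_i$ from Section~\ref{sec:MR-expectation} and the fact that $R_i > \log^7 \Delta$ (Lemma~\ref{lem:sizes}), the threshold $t = \tfrac{1}{2}\sqrt{R_i}\log^2\Delta$ satisfies $t > 96c\sqrt{r \Exp(X)} + 128 r c^2$ for $\Delta$ large enough, so Talagrand's inequality yields
\[
\Pr\bigl(|X - \Exp(X)| > t\bigr) \leq 4 \exp\!\left(-\frac{t^2}{8 c^2 r (4 \Exp(X) + t)}\right) \leq 4\exp\bigl(-\Omega(\log^4 \Delta)\bigr) \leq \frac{\Delta^{-10\log\Delta}}{3}.
\]
The main obstacle is the Lipschitz bound: it relies on the observation that same-coloured edges through a common vertex mutually conflict, so a single trial change cannot cause many of them to simultaneously flip status. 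The certification bound and the numerical comparison that follow are routine.
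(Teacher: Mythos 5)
Your proposal is correct and follows essentially the same route as the paper: apply Talagrand's inequality to $|R'(v,c)|$ with a constant Lipschitz bound and constant certification rate, then close with the crude numerical comparison using $R_i > \log^7 \Delta$ and $\Exp(|R'(v,c)|) \leq R_i$. Your handling of the certification step is in fact slightly more careful than the paper's: by taking $r=2$ and explicitly including the case where $e$ is assigned $c_e$ and $\Eq(e,c_e)$ outputs ``uncolour,'' you cover the equalizing coin flip, which the paper's one-trial-per-edge certificate ($r=1$) glosses over; your observation that this certificate remains valid even if an (unrevealed) conflict also exists is the right way to make it airtight. Your Lipschitz argument (same-coloured edges through a common endpoint of $f$ already mutually conflict, so their status is insensitive to $T(f)$) is the correct reason the constant is $O(1)$ — the paper states the same fact in the narrower form that at most one edge of $R(v,c)$ can retain a given colour. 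Since the threshold $t = \tfrac{1}{2}\sqrt{R_i}\log^2\Delta$ dwarfs both $96c\sqrt{r\Exp X}+128rc^2$ and the requirement $t^2/(8c^2 r(4\Exp X + t)) \gg 10\log^2\Delta$ for any $O(1)$ values of $c,r$, the precise constants do not affect the conclusion.
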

\begin{proof}
	Fix a vertex $v$, a colour $c$ and let $R' = |R'(v,c)|$.
	We wish to apply Talagrand's inequality (Theorem~\ref{thm:talagrand}) to $R'$ with $c=2$, $r=1$ and $t=\tfrac{1}{2}\sqrt{R_i} \log^2\Delta$.
	
	Note that changing any the outcome of the edge activation in step~\ref{step:activate}, colour assignment in step~\ref{step:assign} or coin flip in step~\ref{step:unassign-and-coinflip} and~\ref{step:update-lists-and-coinflip} can affect the size of $R'$ by at most $2$.
	(Here it is important to recall that at most one edge of $R(v,c)$ can retain a specific colour at the same time.)
	Moreover, if $R'\geq s$, then for every edge $e \in R'(v,c)$ there is either an activation event or an assignment of a colour to an edge incident to $e$ that witnesses $e$ not retaining a colour.
	Recall that by Lemma~\ref{lem:sizes}, $R_i \geq \log^7 \Delta$.
	Moreover, $\Exp(R') \leq R_i \leq L_i$.
	Since $$t=\tfrac{1}{2}\sqrt{R_i} \log^2\Delta \geq 96c \sqrt{r \Exp(R')} + 128rc^2,$$ we obtain from Theorem~\ref{thm:talagrand} that 
	\begin{align*}
	\Pr(|R'-\Exp(R')| > t) \leq 4 \exp\left({-\frac{t^2}{8c^2r(4\Exp(R')+t)}}\right)\leq \exp\left(-\frac{t}{2^8R_i}\right) \leq \frac{\Delta^{-10\log \Delta}}{3},
	\end{align*}
	where we used that $t \leq 4\Exp(R') \leq 4R_i$ in the second inequality.
\end{proof}

\begin{claim}\label{cla:concentration-L}
	For every edge $e$, we have 
	\begin{align*} 	
	\Pr(\left||L'(e)|-\Exp(|L'(e)|)\right|> \tfrac{1}{2} \sqrt{L_i} \log^2 \Delta) \leq \frac{\Delta^{-10\log \Delta}}{3}.
	\end{align*}
\end{claim}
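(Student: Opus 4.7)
The plan is to mirror the proof of Claim~\ref{cla:concentration-R}: apply Talagrand's inequality (Theorem~\ref{thm:talagrand}) to the non-negative variable $Y := L_i - |L'(e)|$, which counts the number of colours removed from $L(e)$ during the colouring procedure, viewed as a function of the independent trials --- the activations in step~\ref{step:activate}, the colour assignments in step~\ref{step:assign}, and the $\Eq$ and $\Vq$ coin flips in steps~\ref{step:unassign-and-coinflip} and~\ref{step:update-lists-and-coinflip}. I shall take the Lipschitz constant to be $c = 2$, target a certification constant $r = O(1)$, and set the deviation parameter to $t = \tfrac{1}{2}\sqrt{L_i}\log^2\Delta$.

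For the Lipschitz step I shall argue that altering any single activation or colour assignment at some edge $f$ changes the retention status of at most $f$ itself and of one neighbour of $f$ (the unique neighbour that switches between blocked and unblocked), and each such retention change in turn flips the membership of at most one colour in $L(e)$; an $\Eq$ or $\Vq$ coin flip similarly affects only a single colour. Hence one trial can change $Y$ by at most $2$.

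The certification step is the delicate one, and I expect it to be the main obstacle. Working with the dual variable $Y$ reduces the task to certifying $Y \geq s$ by exhibiting $s$ distinct colours removed from $L(e)$, each witnessed by a bounded set of trials: the responsible $\Vq(u,c)$ or $\Vq(v,c)$ coin flip, or else --- when the removal comes from some edge $f \in N(u,c) \cup N(v,c)$ retaining $c$ --- the activation, colour assignment and $\Eq$ coin of $f$ together with a few surrounding trials needed to verify that no neighbour of $f$ was also assigned $c$. The subtle point, and the reason naive certification fails, is that in principle confirming retention of $f$ could require inspecting the colour assignments of all $O(N_i)$ neighbours of $f$; I plan to circumvent this by importing the bookkeeping of Molloy and Reed~\cite[Chapter~14]{MR13} (who address precisely this concentration statement in the non-cycle-tweaked setting), showing that on the high-probability event that no edge is overwhelmed with competing assignments the per-colour trial cost stays bounded by a constant, yielding $r = O(1)$.

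With $\Exp(Y) \leq L_i$ and $L_i \geq \log^7\Delta$ from Lemma~\ref{lem:sizes}, the choice $t = \tfrac{1}{2}\sqrt{L_i}\log^2\Delta$ comfortably exceeds $96c\sqrt{r\Exp(Y)} + 128rc^2$, so Theorem~\ref{thm:talagrand} yields
\begin{align*}
\Pr\bigl(|Y - \Exp(Y)| > t\bigr) \leq 4\exp\!\left(-\frac{t^2}{8c^2 r(4\Exp(Y)+t)}\right) \leq \exp\!\left(-\frac{L_i}{C}\right) \leq \frac{\Delta^{-10\log\Delta}}{3}
\end{align*}
for a suitable absolute constant $C$. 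Since $|L'(e)| = L_i - Y$, the tail probabilities of $|L'(e)|$ and $Y$ around their respective means coincide, which gives the claim.
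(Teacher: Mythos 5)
You correctly identify the crux — that naive certification of ``colour $c$ was removed because some $f \in N(u,c)\cup N(v,c)$ retained $c$'' could require inspecting all $O(N_i)$ neighbours of $f$, which breaks the $rs$ bound — but your proposed fix does not actually resolve it. You suggest conditioning on ``the high-probability event that no edge is overwhelmed with competing assignments'' to bound the per-colour cost, but Theorem~\ref{thm:talagrand} requires the certification condition to hold \emph{for every set of possible outcomes of the trials}, not merely on a high-probability event; conditioning destroys the product structure of the trial space and so the inequality is no longer directly applicable. This is the gap, and ``importing the bookkeeping'' without saying what it is does not fill it.

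The paper's resolution (following Molloy and Reed~\cite[Chapter~14]{MR13}) is not a conditioning argument but an exact algebraic decomposition. Writing $X$ for the number of colours $c\in L(e)$ retained by some edge of $(N(u,c)\cup N(v,c))\setminus\{e\}$, and $X'$ for the removals caused by the $\Vq$ coin flips in step~\ref{step:update-lists-and-coinflip}, one has $|L'(e)|=|L(e)|-(X+X')$. The variable $X$ is then expressed via inclusion-exclusion as
$X=(X_{2,0}-X_{2,2})+\bigl((X_{1,0}-X_{2,2})-(X_{1,1}-X_{2,1})\bigr)$,
where $X_{j,k}$ counts colours \emph{assigned} to an edge in at least $j$ of the two sides and \emph{removed} from an edge in at least $k$ of them. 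Each $X_{j,k}$, unlike $X$ itself, admits bounded positive certification: $X_{j,k}\geq s$ is witnessed by at most $s(j+k)\leq 4s$ trials (the $j$ assignments plus the up to $k$ uncolourings per colour), giving $r=4$, $c=2$, $t=\tfrac{1}{14}\sqrt{L_i}\log^2\Delta$ for each piece, and the pieces are recombined by the triangle inequality. The much simpler $X'$ is handled directly (Chernoff suffices). You should replace the vague conditioning step with this explicit decomposition; once that is in place the remainder of your argument (Lipschitz constant $2$, checking $t\gg 96c\sqrt{r\Exp}+128rc^2$ via $L_i\geq\log^7\Delta$, and the final duality $|L'(e)|=L_i-Y$) goes through.
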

\begin{proof}
	Fix an edge $e=uv$ and let $L' = |L'(e)|$.
	Let $X$ be the number of colours $c \in L(e)$, which are retained by at least one edge in $(N(u,c) \cup N(v,c))\sm\{e\}$.
	For $0 \leq k \leq j \leq 2$, we define $Y_{j,k}$ to be the number of colours which are assigned to an edge in \emph{exactly} $j$ of $N(u,c)\sm \{e\}$, $N(v,c)\sm \{e\}$ and which are removed from an edge in at least $k$ of $N(u,c)\sm \{e\}$, $N(v,c)\sm \{e\}$ during step~\ref{step:assign} and~\ref{step:unassign-and-coinflip} of the procedure.
	Similarly, we define $X_{j,k}$ to be the number of colours which are assigned to an edge in \emph{at least} $j$ of $N(u,c)\sm \{e\}$, $N(v,c)\sm \{e\}$ and which are removed from an edge in at least $k$ of $N(u,c)\sm \{e\}$, $N(v,c)\sm \{e\}$ during step~\ref{step:assign} and~\ref{step:unassign-and-coinflip} of the procedure.
	Note that $Y_{2,k} = X_{2,k}$ and for $j<2$, $Y_{j,k} = X_{j,k} - X_{j+1,k}$.
	Making use of the very useful fact that for any $w$, if a colour is removed from at least one edge in $N(v,c)$, then it is removed from every edge in $N(v,c)$ to which it is assigned, we obtain that:
	\begin{align*}
		X=(Y_{2,0} - Y_{2,2}) + (Y_{1,0} - Y_{1,1}) = (X_{2,0} - X_{2,2}) + ((X_{1,0} - X_{2,2})-(X_{1,1} - X_{2,1})).
	\end{align*}
	
	Fix $1\leq j,k\leq 2$. We will show that  $X_{j,k}$ is highly concentrated.
	To this end, we  apply Talagrand's inequality (Theorem~\ref{thm:talagrand}) to $X_{j,k}$ with $c=2$, $r=4$ and $t=  \tfrac{1}{14}\sqrt{L_i} \log^2\Delta$.
	First of all, changing the colour assigned to any one edge from $c_1$ and $c_2$ can only affect whether $c_1$ and/or $c_2$ are counted by $X_{j,k}$, and changing the decision to uncolour an edge in step~\ref{step:unassign-and-coinflip} can only affect whether the colour of the edge is counted by $X_{j,k}$.
	Secondly, if $X_{j,k} \geq s$, then there is a set of at most $s(j+k)$ outcomes which certify this fact, namely for each of the $s$ colours, $j$ edges on which that colour appears, along with $k$ (or fewer) outcomes which cause $k$ of those edges to be uncoloured.
	Recall that by Lemma~\ref{lem:sizes}, $L_i \geq \log^7 \Delta$.
	Moreover, $\Exp(X_{j,k}) \leq  2N_i \leq 2L_i$.
	Since $$t=\tfrac{1}{14}\sqrt{L_i} \log^2\Delta \geq 96c \sqrt{r \Exp(X_{j,k})} + 128rc^2,$$ we obtain from Theorem~\ref{thm:talagrand} that 
	\begin{align*}
	\Pr(|X_{j,k}-\Exp(X_{j,k})| > t) \leq 4 \exp\left({-\frac{t^2}{8c^2r(4\Exp(X_{j,k})+t)}}\right)\leq \exp\left(-\frac{t^2}{2^{11} L_i}\right) \leq \frac{\Delta^{-10\log \Delta}}{21},
	\end{align*}
	where we used that $t \leq 4\Exp(X_{j,k}) \leq 8L_i$ in the second inequality.
	
	Finally, let $X'$ be the number of colours removed from $\bigcup_{f \in N(u,c)} L(f)$ or  $\bigcup_{f \in N(v,c)} L(f)$ in step~\ref{step:update-lists-and-coinflip}.
	It follows easily from Talagrand's inequality (or, simpler, Chernoff's inequality) that $X'$ is  highly concentrated around its expectation.
	(Note that our constant $t$ and the bound on the probability allows for the sevenfold error term.)
	Since $|L'(e)| = |L(e)| - (X + X')$, this concludes the proof.
\end{proof}

\begin{claim}
	For every vertex $v$ and colour $c$, we have
	\begin{align*} 	
	\Pr(\left||N^*(v,c)|-\Exp(|N^*(v,c)|)\right|> \tfrac{1}{2} \sqrt{N_i} \log^2 \Delta) \leq \frac{\Delta^{-10\log \Delta}}{3}.
	\end{align*}
\end{claim}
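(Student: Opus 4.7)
I would follow the template of Claim~\ref{cla:concentration-L}: decompose the random variable into a signed combination of simpler counts, each controlled by Talagrand's inequality. Applying Talagrand directly to $|N^*(v,c)|$ is problematic because the positive certificate for membership in $N^*$ requires the \emph{absence} of any retention of $c$ throughout the whole neighbourhood $N(u,c)$---potentially $\Theta(N_i)$ trials per witnessed edge, which is ruinous for the tail bound. To sidestep this, I would pass to the complement
\[
M(v,c) \;:=\; |N(v,c)| - |N^*(v,c)|,
\]
which counts those $uv \in N(v,c)$ such that either $uv$ retains some colour in step~\ref{step:unassign-and-coinflip} or some $e' \in N(u,c)$ retains $c$ in step~\ref{step:unassign-and-coinflip}. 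Since $|N(v,c)|$ is determined by the input $(\gamma_i, L_i)$ and is therefore deterministic, concentration of $M(v,c)$ is equivalent to concentration of $|N^*(v,c)|$.

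Every contribution to $M$ is now witnessed \emph{existentially} by a single retention event. I would split $M$ according to the type of witness into a constant number of counts $X_1, \dots, X_k$---for example based on whether the witness is the edge $uv$ itself or lies in $N(u,c)$, and whether two potential witnesses share an endpoint---mirroring the $X_{j,k}$ decomposition used for $|L'(e)|$ in Claim~\ref{cla:concentration-L}. Each $X_j$ is $O(1)$-Lipschitz: changing the outcome of a single activation, colour assignment, or $\Eq$ coin flip alters the retention status of at most the trial's own edge together with its at-most-two incident edges that could have been in conflict, and hence changes $X_j$ by $O(1)$. After the decomposition each $X_j$ is also $O(1)$-certifiable in Talagrand's sense, since each witnessed retention is attested by the activation, assignment, and $\Eq$ flip of the witness edge together with the $O(1)$ trials of the at most two incident edges whose outcomes are relevant to the ``no conflict'' clause---exactly the point where the complement $M$ outperforms $N^*$.

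Combined with the expectation bound $\Exp(M) \le N_i$ (which follows from the fact that $\Exp(|N^*|) \ge 0$) and the lower bound $N_i \ge \tfrac{1}{2}\log^7 \Delta$ from Lemma~\ref{lem:sizes}, Talagrand's inequality with $c, r = O(1)$ and an appropriately scaled $t = \Theta(\sqrt{N_i}\log^2 \Delta)$ yields a per-piece tail bound of $\exp(-\Omega(\log^4 \Delta))$. Summing the $O(1)$ such contributions gives a bound of at most $\tfrac{1}{3}\Delta^{-10\log \Delta}$ on the probability that $|M(v,c) - \Exp(M(v,c))| > \tfrac{1}{2}\sqrt{N_i}\log^2\Delta$, which in turn yields the claimed concentration for $|N^*(v,c)|$. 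The principal obstacle---and the reason this claim is not just a cut-and-paste of Claim~\ref{cla:concentration-R}---is the asymmetric certifiability of $N^*$; the workaround via the complement together with a decomposition paralleling the one used for $|L'(e)|$ in Claim~\ref{cla:concentration-L} is the key step.
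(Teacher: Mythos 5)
Your proposal is essentially the paper's own approach. The paper likewise proceeds indirectly: it writes $N^*(v,c)$ as a set difference $A_{v,c}\setminus(B_{v,c}\cup C_{v,c})$, where $A_{v,c}$ is the set of edges of $N(v,c)$ that fail to retain their assigned colour, and $B_{v,c},C_{v,c}\subset A_{v,c}$ record the ``witnessed'' conflict and coin-flip events, then applies Talagrand to each piece. The concentration of $|A_{v,c}|$ goes as in Claim~\ref{cla:concentration-R}, and $|B_{v,c}|$, $|C_{v,c}|$ are handled via the $X_{j,k}$-style decomposition of Claim~\ref{cla:concentration-L}, exactly as you anticipate. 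Your reformulation via the complement $M(v,c)=|N(v,c)|-|N^*(v,c)|$ is a cosmetic variant of the same set-algebraic decomposition, and your diagnosis of the obstruction (membership in $N^*$ is a universally quantified, hence non-certifiable, event, while membership in its complement is existentially witnessed by $O(1)$ trials) is precisely the reason the paper passes through $A$, $B$, $C$ rather than applying Talagrand to $N^*$ directly.
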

\begin{proof}
	We again proceed indirectly.
	We let $A_{v,c}$ be the set of edges $e \in N(v,c)$, which do not retain their colours.
	We let $B_{v,c}$ be the set of edges $e = uv$ in $A_{v,c}$ such that $c$ is retained on some vertex of $N(u,c)$.
	We let $C_{v,c}$ be the set of edges $e =uv$ in $A_{v,c} \sm B_{v,c}$ such that $c$ is removed from  the lists $L(f)$ of all edges $f \in N(v,c)$ because of an equalizing coin flip.
	
	The proof that $|A_{v,c}|$ is highly concentrated is virtually identical to the proof Claim~\ref{cla:concentration-R}.
	The proof that $|B_{v,c}|$ and $|B_{v,c}|$ are highly concentrated follows along the lines of the proof of Claim~\ref{cla:concentration-L}.
	Since $T'(u,c) = A_{v,c} \sm (B_{v,c} \cup C_{v,c})$, the desired result follows.
\end{proof}

\end{document}